\documentclass{article}

\usepackage{indentfirst}
\usepackage{amsmath}
\usepackage{amsthm}
\usepackage{amsfonts}
\usepackage{tikz-cd}
\usepackage{url}

\theoremstyle{plain}
\newtheorem{thm}{Theorem}[section]
\newtheorem{prop}[thm]{Proposition}
\newtheorem{lem}[thm]{Lemma}
\newtheorem{conj}[thm]{Conjecture}

\newtheorem{rmk}[thm]{Remark}
\newtheorem{prob}[thm]{Problem}

\DeclareMathOperator{\codim}{codim}
\DeclareMathOperator{\can}{can}

\title{On Miyanishi conjecture for quasi-projective varieties}
\author{Takumi Asano}
\date{}

\begin{document}

\maketitle

\begin{abstract}
Miyanishi conjecture claims that for any variety over an algebraically closed field of characteristic zero, any endomorphism of such a variety which is injective outside a closed subset of codimension at least $2$ is bijective. We prove Miyanishi conjecture for any quasi-projective variety $X$ which is a dense open subset of a $\mathbb{Q}$-factorial normal projective variety $\overline{X}$ such that $\codim (\overline{X} \setminus X) \ge 2$ with the ample canonical divisor or the ample anti-canonical divisor. Also, we observe Miyanishi conjecture without the conditions of its canonical divisor by using minimal model program. In particular, we prove Miyanishi conjecture in the case that $\overline{X}$ has canonical singularities and $\overline{X}$ has the canonical model which is obtained by divisorial contractions.
\end{abstract}

\section{Introduction}
We work over an algebraically closed field $k$ of characteristic zero, and varieties over $k$ mean separated integral schemes of finite type over $k$. We call $1$-dimensional varieties curves.

J. Ax proved the following theorem in \cite{Ax}.

\begin{thm}[{\cite[Theorem]{Ax}}]\label{A}
Let $A$ and $B$ be schemes such that $A$ is of finite type over $B$. For any endomorphism $\varphi$ of $A$ over $B$, if $\varphi$ is injective, then $\varphi$ is bijective.
\end{thm}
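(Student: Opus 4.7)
The plan is to reduce the statement to the case of a scheme of finite type over a finite field, where an injective self-map is automatically bijective by a counting argument.

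The first step is to reduce to the case that the base is a finitely generated $\mathbb{Z}$-algebra. Working one affine open of $B$ at a time, we may assume $B = \operatorname{Spec} S$ is affine; since $A$ is of finite presentation over $S$ and so is $\varphi$, by Noetherian (qcqs) approximation the data $(A, \varphi)$ is the base change of data $(A_0, \varphi_0)$ over some finitely generated $\mathbb{Z}$-subalgebra $S_0 \subset S$. Injectivity descends to $\varphi_0$ after possibly enlarging $S_0$, and surjectivity of $\varphi$ is implied by surjectivity of $\varphi_0$ (since surjectivity is preserved by arbitrary base change), so it suffices to treat the case when the base is a finitely generated $\mathbb{Z}$-algebra.

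In that setting $\operatorname{Spec} S_0$ is a Jacobson scheme whose closed points have finite residue fields, and the same holds for $A_0$ since it is of finite type over $S_0$. Suppose for contradiction that $\varphi_0$ is not surjective. By Chevalley's theorem its image is a proper constructible subset of $A_0$, so its complement contains a closed point $a$. The image $b$ of $a$ in $\operatorname{Spec} S_0$ is then also closed, and $\kappa(b) = \mathbb{F}_q$ is finite. The endomorphism $\varphi_0$ restricts to an injective endomorphism of the fiber $(A_0)_b$, which is of finite type over $\mathbb{F}_q$. Since this restriction is defined over $\mathbb{F}_q$, it commutes with the arithmetic Frobenius on $\overline{\mathbb{F}_q}$-points and therefore preserves the finite set $(A_0)_b(\mathbb{F}_{q^n})$ for every $n$. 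An injective self-map of a finite set is bijective, so the restriction is surjective on $\overline{\mathbb{F}_q}$-points of the fiber, and in particular $a$ lies in its image, contradicting our choice.

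The main obstacle I anticipate is the bookkeeping in the descent step: one must arrange that injectivity is preserved for the approximating pair $(A_0, \varphi_0)$, and that the failure of surjectivity is visible at a closed point of $A_0$. Injectivity may require enlarging $S_0$, while non-surjectivity is transported via Chevalley's theorem from the generic situation to a closed fiber via the Jacobson property. Once these reductions are in place, the finite-field endgame, that injective self-maps of finite sets are bijective, is automatic.
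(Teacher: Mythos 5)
The paper does not prove Theorem \ref{A}; it cites Ax \cite{Ax} and uses it as a black box, so there is no internal argument to compare against. Your proposal reconstructs the standard ``spread out over $\mathbb{Z}$ and count over finite fields'' proof, and the overall architecture is right, but two steps do not close as written.

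First, the hypothesis is only that $A$ is of \emph{finite type} over $B$, and $B$ may be non-Noetherian, so $A$ need not be of finite presentation over $S$; the Noetherian limit argument you invoke requires finite presentation. The usual fix is to pass to fibers \emph{before} approximating: $\varphi$ is surjective iff each restriction $\varphi_b : A_b \to A_b$ to the scheme-theoretic fiber over $b \in B$ is surjective, and $\varphi_b$ inherits topological injectivity since the underlying space of $A_b$ is a subspace of that of $A$. Over the field $\kappa(b)$, finite type and finite presentation agree, and the approximation over a finitely generated $\mathbb{Z}$-subalgebra of $\kappa(b)$ then proceeds as you describe. Second, topological injectivity of $\varphi_0$ on the closed fiber $(A_0)_b$ does not by itself give injectivity on the set $(A_0)_b(\mathbb{F}_{q^n})$: two distinct $\mathbb{F}_{q^n}$-valued points lying over the \emph{same} closed point $x$ can have equal images when the residue-field inclusion $\kappa(\varphi_0(x)) \hookrightarrow \kappa(x)$ is proper, so ``injective self-map of a finite set'' is not yet available. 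It is cleaner to count closed points of bounded residue degree: for each $n$ the set of closed points $x$ of $(A_0)_b$ with $[\kappa(x):\mathbb{F}_q] \le n$ is finite and $\varphi_0$-stable, because residue degrees can only drop along $\varphi_0$; topological injectivity then makes $\varphi_0$ a bijection of each such finite set, so every closed point, in particular your chosen $a$, lies in the image. Combined with Chevalley's theorem and the Jacobson property this yields surjectivity of $\varphi_0$ on the fiber, and hence the desired contradiction. Both gaps are repairable, but as stated the argument passes over them.
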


The following generalization of Theorem \ref{A} is conjectured by M. Miyanishi in \cite{Open}.

\begin{conj}[Miyanishi conjecture, \cite{Open}]\label{M}
Let $\varphi:X \to X$ be an endomorphism of a variety $X$ over $k$, and let $Y$ be a closed subset of $X$ which is of codimension at least $2$. If $\varphi$ is injective on $X \setminus Y$, then $\varphi$ is an automorphism.
\end{conj}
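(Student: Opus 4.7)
The plan is to combine Ax's theorem with birational techniques, extending $\varphi$ to a suitable compactification of $X$ and exploiting positivity of the canonical class.

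First I would establish the basic properties of $\varphi$. Writing $U := X \setminus Y$, injectivity of $\varphi|_U$ together with $\dim U = \dim X$ forces $\varphi$ to be dominant, and the induced field extension $\varphi^\ast : k(X) \hookrightarrow k(X)$ has degree equal to the generic fiber cardinality, which is $1$; so $\varphi$ is birational. If one could further show that $\varphi$ is globally injective, Theorem \ref{A} (applied over $\operatorname{Spec} k$) would give bijectivity, and bijective plus birational on a normal variety yields an isomorphism by Zariski's main theorem. So the real task is to control the behavior of $\varphi$ along $Y$ (equivalently, to promote the birational inverse to a morphism).

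Next I would pass to a normal projective compactification $\overline{X} \supset X$, chosen so that $\codim_{\overline{X}}(\overline{X}\setminus X) \ge 2$ (this can often be arranged in the quasi-projective case after a blow-up/normalization in the boundary). The given endomorphism then induces a rational self-map $\overline{\varphi} : \overline{X} \dashrightarrow \overline{X}$ which is defined in codimension $1$. Applying the ramification formula $K_{\overline{X}} = \overline{\varphi}^\ast K_{\overline{X}} + R$, the ramification divisor $R$ is supported where $\overline{\varphi}$ fails to be étale in codimension $1$; birationality on the open $U$ (whose complement has codimension $\ge 2$) forces $R = 0$, so $\overline{\varphi}$ is étale in codimension $1$. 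If one of $\pm K_{\overline{X}}$ is ample, the relation $\overline{\varphi}^\ast K_{\overline{X}} \equiv K_{\overline{X}}$ constrains the degree of $\overline{\varphi}$ (as a generically finite self-map) to be $1$, whence $\overline{\varphi}$ is an isomorphism; restricting back to $X$ gives $\varphi \in \operatorname{Aut}(X)$. When neither sign of $K_{\overline{X}}$ is ample, I would run the MMP to replace $\overline{X}$ by its canonical model (or a Mori fibre space) and argue case-by-case, using the fact that divisorial contractions preserve the $\codim \ge 2$ boundary condition.

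The main obstacle is the compactification and extension step. Even granting a projective $\overline{X}$ with boundary of codimension $\ge 2$, the rational map $\overline{\varphi}$ may have indeterminacy along codimension-$1$ loci once one attempts to resolve it, and then the ramification argument above breaks down. In the general conjecture one has no control over the birational geometry of $X$ at infinity and no way to guarantee any positivity for $K_{\overline{X}}$; this is the core difficulty. I expect the proof to proceed not by tackling the general case head-on, but by hypothesizing a $\mathbb{Q}$-factorial $\overline{X}$ with $\codim(\overline{X}\setminus X)\ge 2$ and ample $\pm K_{\overline{X}}$, and then using MMP to extend the statement to more general $\overline{X}$ whose canonical model is reached by divisorial contractions.
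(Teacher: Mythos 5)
You rightly identify that Conjecture \ref{M} is not proved in the paper in full generality; the paper only establishes it under extra hypotheses on a projective model, and your closing paragraph anticipates Theorems \ref{main1} and \ref{main2} quite accurately. The opening of your sketch also matches the paper: birationality of $\varphi$ from injectivity on a dense open set (Lemma \ref{bir1}, Remark \ref{bir2}), reduction to normal $X$ (Lemma \ref{normal}), and the eventual appeal to Zariski's main theorem and Ax's theorem once finite fibers are known. One point you gloss over: the paper does not manufacture a compactification $\overline{X}$ with $\codim(\overline{X}\setminus X)\ge 2$ --- its existence is a hypothesis in Theorems \ref{main1}--\ref{main3}, not something deduced.

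The substantive gap is the step where you write $K_{\overline{X}} = \overline{\varphi}^{\ast}K_{\overline{X}} + R$, conclude $R = 0$, and then assert that ampleness of $\pm K_{\overline{X}}$ forces $\deg \overline{\varphi} = 1$ and hence that $\overline{\varphi}$ is an isomorphism. As stated this is a non-sequitur: $\varphi$ is already birational, so its degree is $1$ before any canonical-class consideration enters, and degree $1$ does not yield an isomorphism (a divisorial contraction is degree $1$ and not an isomorphism). What the paper actually does is take a common resolution $p, q : \widetilde{X} \to \overline{X}$ of $\varphi$, use the characterization of rational contractions (Proposition \ref{ratcon}, Remark \ref{invratcon}) to show that the sets of $p$-exceptional and $q$-exceptional prime divisors coincide, and compare the two discrepancy formulas for $K_{\widetilde{X}}$ to obtain $p^{\ast}K_{\overline{X}} = q^{\ast}K_{\overline{X}}$ in the N\'{e}ron-Severi group of $\widetilde{X}$. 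Then, for any curve $C$ that $\varphi$ contracts to a point, lifting $C$ to $\widetilde{C}$ with $p(\widetilde{C}) = \overline{C}$ and intersecting gives $(K_{\overline{X}}, \overline{C}) = 0$, contradicting ampleness of $\pm K_{\overline{X}}$. So $\varphi$ has finite fibers; Theorem \ref{ZMT} makes it an open embedding, Theorem \ref{A} makes it bijective, and Proposition \ref{bijiso} upgrades it to an isomorphism. This contracted-curve intersection argument on a resolution is the key mechanism missing from your proposal; without it, ampleness cannot be brought to bear in the way you suggest.
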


S. Kaliman showed that Conjecture \ref{M} is true in the case that $X$ is affine or complete over $k$ in \cite{Kal}, and N. Das showed that Conjecture \ref{M} is true in other several cases, especially when $X$ is smooth, in \cite{Das}.

The following two theorems are our main results which give new conditions on which Conjecture \ref{M} is true.

\begin{thm}\label{main1}
Let X be a dense open subset of a $\mathbb{Q}$-factorial normal projective variety $\overline{X}$ over $k$ with $\codim(\overline{X} \setminus X) \ge 2$. If either the canonical divisor $K_{\overline{X}}$ of $\overline{X}$ or $-K_{\overline{X}}$ is ample, then any endomorphism $\varphi$ of $X$ which satisfies the conditions of Conjecture \ref{M} is an automorphism.
\end{thm}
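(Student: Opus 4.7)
The plan is to extend the endomorphism $\varphi:X\to X$ to a morphism $\overline{\varphi}:\overline{X}\to\overline{X}$ of the projective completion, conclude that $\overline{\varphi}$ is an automorphism (either directly via the pluricanonical polarization, or by invoking Kaliman's complete case of Conjecture \ref{M}), and then restrict back to $X$.

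First, I would observe that the hypotheses force $\varphi$ to be birational and to contract no prime divisor of $X$. Dominance follows from injectivity on the dense open $X\setminus Y$, and generic injectivity together with characteristic zero then yields birationality; if $D\subset X$ is any prime divisor, then $D\setminus Y$ is dense in $D$ (since $Y$ has codimension at least $2$), so $\varphi|_D$ is generically injective and $\varphi(D)$ is again a prime divisor of $X$. Composing $\varphi$ with the inclusion $X\hookrightarrow\overline{X}$ and using that $\overline{X}$ is normal with projective target, $\varphi$ extends uniquely to a rational map $\overline{\varphi}:\overline{X}\dashrightarrow\overline{X}$ whose indeterminacy locus is contained in $\overline{X}\setminus X$ and hence has codimension at least $2$.

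The heart of the proof is to upgrade $\overline{\varphi}$ from a rational map to a morphism, which is where the ampleness of $\pm K_{\overline{X}}$ enters. Fix $m$ so that $mK_{\overline{X}}$ (respectively $-mK_{\overline{X}}$) is Cartier and very ample; this is possible by $\mathbb{Q}$-factoriality of $\overline{X}$. When $K_{\overline{X}}$ is ample, I would exploit the birational invariance of pluricanonical sections: pulling back sections of $mK_{\overline{X}}$ via $\overline{\varphi}$ on its locus of definition and extending across the codimension-$2$ indeterminacy by normality and reflexivity of $\mathcal{O}_{\overline{X}}(mK_{\overline{X}})$ produces a linear automorphism of $H^0(\overline{X},\mathcal{O}(mK_{\overline{X}}))$, whose projectivization preserves the pluricanonical image of $\overline{X}$ and agrees with $\overline{\varphi}$ on a dense open; this upgrades $\overline{\varphi}$ to an automorphism of $\overline{X}$. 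When $-K_{\overline{X}}$ is ample, the same scheme works once one has shown that $\overline{\varphi}$ is small, i.e.\ an isomorphism in codimension $1$, since then $\overline{\varphi}^*(-mK_{\overline{X}})=-mK_{\overline{X}}$ as Weil $\mathbb{Q}$-Cartier divisors and the analogous argument with anti-pluricanonical sections applies.

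The main obstacle is the Fano case of the extension step: we already know from the first paragraph that $\overline{\varphi}$ contracts no divisor, but smallness additionally requires that $\overline{\varphi}^{-1}$ contract no divisor, and since anti-pluricanonical forms are not a birational invariant this must be argued separately, for example by comparing intersection numbers on a common resolution of indeterminacy against the ample class $-K_{\overline{X}}$. Once $\overline{\varphi}:\overline{X}\to\overline{X}$ is known to be an automorphism, the descent is routine: $\overline{\varphi}(X)\subset X$ together with the fact that $\overline{X}\setminus X$ has finitely many irreducible components, each of the same dimension as its image under the bijective automorphism $\overline{\varphi}$, forces $\overline{\varphi}$ to permute those components; hence $\overline{\varphi}(\overline{X}\setminus X)=\overline{X}\setminus X$ and $\varphi=\overline{\varphi}|_X$ is an automorphism of $X$.
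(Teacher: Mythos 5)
Your route is genuinely different from the paper's, and with some repair it does work, so let me compare the two and flag the imprecisions.

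The paper never tries to extend $\varphi$ to a morphism of $\overline{X}$. Instead it observes that $\varphi$ restricts to an isomorphism $X\setminus Y \xrightarrow{\sim} X\setminus Z$ with $Z = X\setminus\varphi(X\setminus Y)$, uses Kaliman's Lemma \ref{Z} to see that $Z$ (hence $\overline{X}\setminus(X\setminus Z)$) also has codimension $\ge 2$, takes a common resolution $p,q:\widetilde{X}\to\overline{X}$ of the induced birational self-map of $\overline{X}$, and invokes Casagrande's characterization (Remark \ref{invratcon}) to conclude that $p$-exceptional and $q$-exceptional prime divisors coincide, whence $p^{\ast}K_{\overline{X}}=q^{\ast}K_{\overline{X}}$ in the N\'eron--Severi group. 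Any curve $C\subset Y$ contracted to a point by $\varphi$ then satisfies $(K_{\overline{X}},\overline{C})=0$, contradicting ampleness of $\pm K_{\overline{X}}$. Thus $\varphi$ is quasi-finite, hence an open embedding by Theorem \ref{ZMT}, hence bijective by Theorem \ref{A}. Your approach instead aims to extend $\varphi$ to an automorphism of $\overline{X}$ via the (anti-)pluricanonical embedding and then descend; this is essentially the content of the paper's Proposition \ref{canbiriso}, used there in Section 5 rather than to prove Theorem \ref{main1}. The paper's argument is lighter (no need to extend across the boundary or handle the descent), and it generalizes better to the MMP settings of Theorems \ref{main2} and \ref{main3}; your argument, when correct, gives the a priori stronger-looking conclusion that $\varphi$ extends to an automorphism of $\overline{X}$.

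Two imprecisions in your write-up. First, you speak of pulling back sections of $mK_{\overline{X}}$ ``via $\overline{\varphi}$ on its locus of definition''; but on the locus of definition $\overline{\varphi}$ is merely a morphism, not an open embedding, so $\overline{\varphi}^{\ast}\mathcal{O}(mK_{\overline{X}})$ need not agree with $\mathcal{O}(mK_{\overline{X}})$ there, and the pulled-back sections have no reason to be sections of $\mathcal{O}(mK_{\overline{X}})$. What you actually need is to restrict to the locus where $\varphi$ is an isomorphism, i.e.\ $X\setminus Y \xrightarrow{\sim} X\setminus Z$, where pullback does carry $mK|_{X\setminus Z}$ to $mK|_{X\setminus Y}$; the fact that both complements have codimension $\ge 2$ (the nontrivial part is $Z$, which requires Kaliman's Lemma \ref{Z}) then lets you extend by reflexivity. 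Second, your worry that the anti-canonical case requires a separate argument for smallness is a red herring: once you restrict to the isomorphism locus as above, the whole argument is symmetric in $K_{\overline{X}}$ and $-K_{\overline{X}}$, and smallness of $\overline{\varphi}$ (i.e.\ that neither $\overline{\varphi}$ nor $\overline{\varphi}^{-1}$ contracts a divisor) is already forced by the fact that the isomorphism locus has codimension-$\ge 2$ complement on both sides --- any prime divisor meets it densely. No intersection-number computation on a resolution is needed for that; such a computation is exactly what the paper uses instead of your pluricanonical step.
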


\begin{thm}\label{main2}
Let $X$ be a dense open subset of a $\mathbb{Q}$-factorial normal projective variety $\overline{X}$ over $k$ and $\codim(\overline{X} \setminus X) \ge 2$. Suppose $\overline{X}$ has canonical singularities and $\overline{X}$ has the canonical model $\overline{X}^{\can}$ which is obtained by minimal model program. If each of the steps of this minimal model program is a divisorial contraction, then Conjecture \ref{M} holds for $X$.
\end{thm}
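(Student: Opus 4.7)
The plan is to reduce Theorem~\ref{main2} to Theorem~\ref{main1}, applied to the canonical model $\overline{X}^{\can}$, whose canonical divisor is ample by construction. The decisive structural feature granted by the hypothesis ``only divisorial contractions, no flips'' is that $\pi : \overline{X} \to \overline{X}^{\can}$ is a genuine birational \emph{morphism}, with exceptional divisor $E$ contracted to $Z := \pi(E)$ of codimension $\geq 2$, and $\overline{X}^{\can}$ is again $\mathbb{Q}$-factorial normal projective.

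First I would record standard reductions. The morphism $\varphi$ is birational, being generically injective and equidimensional, and it contracts no prime divisor of $X$ (else that divisor would sit inside $Y$, contradicting $\codim Y \geq 2$). By normality of $\overline{X}$ and projectivity, $\varphi$ extends to a rational self-map $\overline{\varphi} : \overline{X} \dashrightarrow \overline{X}$ with indeterminacy of codimension $\geq 2$. Birational invariance of the canonical ring for varieties with canonical singularities identifies the canonical rings of $\overline{X}$ and $\overline{X}^{\can}$; thus the birational $\overline{\varphi}$ induces a birational self-map of $\overline{X}^{\can}$, which is automatically an automorphism $\overline{\varphi}^{\can}$ because $K_{\overline{X}^{\can}}$ is ample. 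One then has $\pi \circ \overline{\varphi} = \overline{\varphi}^{\can} \circ \pi$ as rational maps from $\overline{X}$ to $\overline{X}^{\can}$.

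Next I would manufacture an $\overline{\varphi}^{\can}$-invariant open subset $X^{\can} \subset \overline{X}^{\can}$ of codimension-$\geq 2$ complement. Let $A := \pi(\overline{X} \setminus X) \cup Z \cup \pi(Y)$, which has codimension $\geq 2$ in $\overline{X}^{\can}$, and set
\[
X^{\can} := \overline{X}^{\can} \setminus \bigcup_{n \in \mathbb{Z}} (\overline{\varphi}^{\can})^n(A).
\]
The finiteness of $\mathrm{Aut}(\overline{X}^{\can})$, a standard consequence of $K_{\overline{X}^{\can}}$ being ample on a positive-dimensional variety, makes this a finite union of codimension-$\geq 2$ subsets, so $\codim(\overline{X}^{\can} \setminus X^{\can}) \geq 2$ and $X^{\can}$ is $\overline{\varphi}^{\can}$-invariant. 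Setting $U := \pi^{-1}(X^{\can})$, one checks that $U \subset X \setminus (E \cup Y)$ and that $\pi|_U : U \xrightarrow{\sim} X^{\can}$ is an isomorphism. Then $\varphi^{\can} := \overline{\varphi}^{\can}|_{X^{\can}} = \pi \circ \varphi \circ (\pi|_U)^{-1}$ is an endomorphism of $X^{\can}$, inheriting the Miyanishi hypothesis from $\varphi$ on $U$ via the isomorphism $\pi|_U$, so Theorem~\ref{main1} gives that $\varphi^{\can}$ is an automorphism of $X^{\can}$.

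Finally, the commutativity $\pi \circ \varphi|_U = \varphi^{\can} \circ \pi|_U$ combined with the isomorphism $\pi|_U$ forces $\varphi(U) \subset U$ and then forces $\varphi|_U : U \to U$ to be a bijective morphism, hence an automorphism of $U$ by normality. To upgrade this to an automorphism of all of $X$, one extends the rational inverse $\varphi^{-1} : X \dashrightarrow X$, defined on $U$, across $X \setminus U$ by normality of $X$ together with a codimension argument that exploits the non-contraction of divisors by $\varphi$ and the bound $\codim(\overline{X} \setminus X) \geq 2$. The main obstacle I anticipate is this last extension: showing that $\varphi^{-1}$ not only extends as a morphism (which is standard with target $\overline{X}$) but actually takes values in $X$ throughout $X$ --- and hence provides a two-sided inverse to $\varphi$ --- requires carefully combining Zariski's main theorem with the non-contraction of divisors and the codim-$\geq 2$ control on both $Y$ and $\overline{X} \setminus X$.
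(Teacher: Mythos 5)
The proposal takes a genuinely different route from the paper's. The paper tracks $(K_{\overline{X}_i},\overline{C}_i)=0$ step by step through the MMP, using Proposition~\ref{excdivcan} and a log-discrepancy calculation to control the exceptional divisor contracted at each step, and derives a contradiction from $(K_{\overline{X}^{\can}},\overline{C}^{\can})=0$. You instead descend $\varphi$ to the canonical model: the pluricanonical rings $R(\overline{X},K_{\overline{X}})$ are birational invariants because $\overline{X}$ has canonical singularities, so the birational self-map $\overline{\varphi}$ induces a graded ring automorphism of $R(\overline{X},K_{\overline{X}})$ and hence an automorphism $\overline{\varphi}^{\can}$ of $\overline{X}^{\can}=\mathrm{Proj}\,R(\overline{X},K_{\overline{X}})$ fitting in $\pi\circ\overline{\varphi}=\overline{\varphi}^{\can}\circ\pi$. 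This is a good idea, and the ``only divisorial contractions'' hypothesis is used exactly as you intend, to make $\pi$ a genuine morphism. (A small inaccuracy: the claim that $\overline{\varphi}^{\can}$ ``is automatically an automorphism because $K_{\overline{X}^{\can}}$ is ample'' needs the canonical-singularities hypothesis; ampleness alone does not force $\mathrm{Bir}=\mathrm{Aut}$. Also, once you have $\overline{\varphi}^{\can}\in\mathrm{Aut}(\overline{X}^{\can})$ and take $X^{\can}$ invariant, the restriction $\varphi^{\can}$ is already an automorphism, so invoking Theorem~\ref{main1} is redundant.)

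However, your last step has a genuine gap. You have produced an invariant open $U\subset X$ on which $\varphi|_U$ is an automorphism, but $U$ is disjoint from the $\pi$-exceptional locus $E$, so $X\setminus U$ contains $E\cap X$, a \emph{divisor}. Extending $\varphi^{-1}$ across a codimension-$1$ set has no chance of succeeding by a ``normality plus codimension'' argument, and you flag exactly this as your main worry. More to the point, your construction shows only that any curve $C$ contracted by $\varphi$ must be contracted by $\pi$, i.e.\ $C\subset E$; that observation does not by itself rule such a $C$ out, which is precisely the work the paper's Proposition~\ref{excdivcan} and log-discrepancy computation are doing.

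There is a clean way to finish along your lines that you did not quite reach. Since $K_{\overline{X}^{\can}}$ is ample and $\overline{X}^{\can}$ has canonical singularities, $\mathrm{Aut}(\overline{X}^{\can})$ is finite (general type), so $\overline{\varphi}^{\can}$ has some finite order $m$. Replacing $\varphi$ by $\varphi^m$, which is legitimate by the paper's Lemma~\ref{iterate} and suffices because $\varphi^m=\mathrm{id}$ would give $\varphi^{-1}=\varphi^{m-1}$, you get $\overline{\varphi}^{\can}=\mathrm{id}$ and hence $\pi\circ\varphi=\pi$ as morphisms on $X$. On the dense open $(\overline{X}\setminus E)\cap X$ the morphism $\pi$ is injective, so $\varphi$ restricts there to the identity, and then $\varphi=\mathrm{id}_X$ by density and separatedness. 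This closes your argument without any extension lemma, and notably the permutation-of-divisors step in the paper's proof serves the same structural role as passing to $\varphi^m$ here.
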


In minimal model program, birational maps which are not divisorial contractions often appear, called flips. The following is also one of our results which tells us that if Conjecture \ref{M} does not hold for threefold $X$ as in Theorem \ref{main2}, then flips which appear in minimal model program have to satisfy some conditions.

\begin{thm}\label{main3}
Let $X$ be a dense open subset of a $\mathbb{Q}$-factorial normal projective threefold $\overline{X}$ over $k$ and $\codim(\overline{X} \setminus X) \ge 2$. Suppose $\overline{X}$ has canonical singularities and $\overline{X}$ has the canonical model $\overline{X}^{\can}$ which is obtained by minimal model program. Let $\overline{X}=\overline{X}_0 \dashrightarrow \overline{X}_1 \dashrightarrow \cdots \dashrightarrow \overline{X}_n=\overline{X}^{\can}$ be a sequence given by minimal model program to obtain $\overline{X}^{\can}$ from $\overline{X}$. Let $\varphi$ be an endomorphism of $X$ which satisfies the conditions of Conjecture \ref{M} but is not an automorphism. Then there is a curve $C$ in $X$ such that $\varphi(C)$ is a point $P \in X$. In the way described in Section $4$, $\varphi_0=\varphi$ induces $\varphi_i:\overline{X}_i \dashrightarrow \overline{X}_i$ for all $i$, and $\overline{C}_0=\overline{C}$, which is the closure of $C$ in $\overline{X}$, transforms into a curve $\overline{C}_i$ in $\overline{X}_i$ for all $i$, and $P_0=P$ transforms into a point $P_i \in \overline{X}_i$ for some $i$. Then, there exists some $i$ such that $f_i:\overline{X}_i \dashrightarrow \overline{X}_{i+1}$ is a flip and neither of the following conditions holds.

\begin{enumerate}
\item $P_i$ is defined and $f_i$ is defined at $P_i$.\\

\item There exists an open subset $U$ of $\overline{X}_i$ such that $f_i$ induces an isomorphism between $U$ and $f_i(U)$, and $U$ contains $\overline{C}_i$.
\end{enumerate}
\end{thm}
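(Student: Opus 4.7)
The plan is to prove the contrapositive: assume that for every flip $f_i$ in the given MMP sequence, at least one of conditions (1) or (2) holds, and derive that $\varphi$ must be an automorphism, contradicting the hypothesis of the theorem. The engine mirrors the proof of Theorem \ref{main2}: push the triple $(\varphi, \overline{C}, P)$ forward through the MMP to reach the canonical model $\overline{X}^{\can}$, where Theorem \ref{main1} applies since $K_{\overline{X}^{\can}}$ is ample, yielding an automorphism of the corresponding open subset $X^{\can} \subset \overline{X}^{\can}$. But by construction $\varphi_n$ also contracts the propagated curve $\overline{C}_n$ to a point, and matching these two facts gives the contradiction.

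The main technical work is verifying step by step that the data $(\varphi_i, \overline{C}_i, P_i)$ propagates consistently. The divisorial contractions are handled exactly as in the proof of Theorem \ref{main2}, via the construction of Section 4. For a flip $f_i$ satisfying condition (2), the open set $U \supset \overline{C}_i$ on which $f_i$ is an isomorphism lets us set $\overline{C}_{i+1} := f_i(\overline{C}_i)$, a curve; and via the identity $\varphi_{i+1} = f_i \circ \varphi_i \circ f_i^{-1}$, $\varphi_{i+1}$ sends a general point of $\overline{C}_{i+1}$ to $f_i(P_i)$ on the locus where $f_i$ is defined. For a flip $f_i$ satisfying condition (1), the point $P_{i+1} := f_i(P_i)$ is well-defined, $\overline{C}_{i+1}$ is the strict transform (a curve by the theorem's setup), and passing to a common resolution of the flip diagram shows that $\varphi_{i+1}$ still contracts $\overline{C}_{i+1}$ to $P_{i+1}$.

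After traversing all $n$ steps, we obtain $\varphi_n$ on $\overline{X}^{\can}$ that restricts to an endomorphism of the open subset $X^{\can} \subset \overline{X}^{\can}$ with complement of codimension at least $2$; this codimension is preserved by divisorial contractions (which are morphisms, so the image of a codim $\ge 2$ subset has codim $\ge 2$ in the target) and by flips (which are isomorphisms in codimension $1$). Since $K_{\overline{X}^{\can}}$ is ample, Theorem \ref{main1} forces $\varphi_n|_{X^{\can}}$ to be an automorphism. On the other hand, $\varphi_n$ contracts $\overline{C}_n$ to a point and $\overline{C}_n \cap X^{\can}$ is open dense in $\overline{C}_n$, the dense-intersection property propagating from $\overline{C} \cap X \supset C$ through the MMP. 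Thus $\varphi_n|_{X^{\can}}$ fails to be injective, contradicting its being an automorphism.

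The hardest part is verifying that the contraction property really persists across a flip when only one of (1) or (2) holds. Under (2) alone, $P_i$ may sit on the flipping locus so that $f_i(P_i)$ is an entire flipped curve, making the rational self-map $\varphi_{i+1}$ potentially indeterminate on $\overline{C}_{i+1}$; under (1) alone, $\overline{C}_i$ may meet the flipping locus, complicating the strict transform. In both situations the natural tool is a common resolution $\overline{W}_i$ of the flip $f_i$, on which one tracks the proper transforms of $\overline{C}_i$ and $P_i$ carefully and verifies the contraction there before pushing forward to $\overline{X}_{i+1}$. This resolution-based bookkeeping, together with the codimension and dense-intersection tracking along the MMP, is where I expect the bulk of the careful work to lie.
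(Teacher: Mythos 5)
Your proposal has the right outer structure (prove the contrapositive: assume every flip satisfies (1) or (2), push data forward along the MMP, get a contradiction at the canonical model), but it carries the wrong invariant, and this produces a genuine gap that the paper's argument is specifically designed to avoid.

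You try to propagate the full triple ``$\varphi_i$ contracts the curve $\overline{C}_i$ to a point $P_i$'' to the canonical model, and to conclude by applying Theorem \ref{main1} to a restriction of $\varphi_n$. This fails at two places. First, under condition (2) alone, $f_i$ need not be defined at $P_i$; if $P_i$ lies on the flipping locus, $f_i(P_i)$ is replaced by the flipped curve and there is no candidate point $P_{i+1}$, so ``$\varphi_{i+1}$ contracts $\overline{C}_{i+1}$ to a point'' is not merely ``potentially indeterminate'' but simply cannot be asserted. You acknowledge this tension and propose to resolve it by careful bookkeeping on a common resolution, but no bookkeeping can recover a point that does not exist; the contraction-to-a-point property genuinely does not propagate. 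Second, your final step applies Theorem \ref{main1} to $\varphi_n$ restricted to an open set $X^{\can}$; this requires $\varphi_n$ to be an actual \emph{morphism} there with the relevant injectivity property, but after flips $\varphi_n$ is only a birational self-map whose indeterminacy locus has not been controlled, and Section~5 of the paper explicitly warns that the induced birational self-map on the end of the MMP ``may not be defined on $\overline{C}^{\min}$,'' so one cannot simply quote Theorem \ref{main1} there.

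The paper sidesteps both issues by tracking a weaker, numerical invariant: the intersection number $(K_{\overline{X}_i},\overline{C}_i)=0$. This survives divisorial contractions (as in Section~3), survives flips satisfying (1) via the same argument, and survives flips satisfying (2) by a separate computation on a resolution of the flip using that $\overline{C}_i$ avoids the flipping locus (so the exceptional divisors whose discrepancies change across the flip do not meet the relevant transform of $\overline{C}_i$). The point $P_i$ is only carried along when possible and is \emph{not} needed for the contradiction: the final contradiction is simply that $K_{\overline{X}^{\can}}$ is ample but $(K_{\overline{X}^{\can}},\overline{C}_n)=0$. Your proposal never isolates this invariant, and without it the propagation step breaks.
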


In the rest of this section, we introduce some useful results which are already known. 

S. Kaliman used the following two useful lemmas in \cite{Kal}.

\begin{lem}[{\cite[Lemma 2]{Kal}}]\label{normal}
If Conjecture \ref{M} is true for normal varieties over $k$, then it is true for all varieties $X$. 
\end{lem}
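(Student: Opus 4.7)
The plan is to reduce the general case to the normal case via normalization. Let $X$ be any variety with endomorphism $\varphi$ injective on $X \setminus Y$ for a closed $Y \subset X$ of codimension at least $2$. Let $\nu: \tilde{X} \to X$ denote the normalization. Since $\nu$ is finite, surjective, and birational, and $\tilde{X}$ is normal, the composite $\varphi \circ \nu: \tilde{X} \to X$ factors uniquely through $\nu$ by the universal property of normalization, giving a canonical lift $\tilde{\varphi}: \tilde{X} \to \tilde{X}$ with $\nu \circ \tilde{\varphi} = \varphi \circ \nu$.

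The first main step is to verify that $\tilde{\varphi}$ satisfies the hypothesis of Conjecture \ref{M} on the normal variety $\tilde{X}$. I would set $\tilde{Y} := \nu^{-1}(Y)$; since $\nu$ is finite, this closed subset has $\codim_{\tilde{X}} \tilde{Y} \geq 2$. Given $\tilde{x}_1, \tilde{x}_2 \in \tilde{X} \setminus \tilde{Y}$ with $\tilde{\varphi}(\tilde{x}_1) = \tilde{\varphi}(\tilde{x}_2)$, applying $\nu$ and invoking injectivity of $\varphi$ on $X \setminus Y$ yields $\nu(\tilde{x}_1) = \nu(\tilde{x}_2)$. The hard part will be to conclude $\tilde{x}_1 = \tilde{x}_2$, since the non-normal locus $Z \subset X$ may be a divisor along which $\nu$ has multiple preimages. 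To handle this, I would exploit that $\varphi$ is birational (being dominant and generically injective, hence of degree one onto itself), so $\tilde{\varphi}$ is birational as well; then an analysis combining this birationality with a Zariski-main-theorem-type argument should show that any divisorial component in the non-injective locus of $\tilde{\varphi}$ lying above $X \setminus Y$ would force an identification of two distinct divisors in $\tilde{X}$ incompatible with birationality. Up to enlarging $\tilde{Y}$ within its codimension bound, this gives that $\tilde{\varphi}$ is injective outside a closed subset of codimension $\geq 2$ in $\tilde{X}$.

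Granting this, the hypothesis that Conjecture \ref{M} holds for normal varieties, applied to $\tilde{\varphi}$, yields that $\tilde{\varphi}$ is an automorphism of $\tilde{X}$. It remains to descend this conclusion to $\varphi$: since $\nu \circ \tilde{\varphi} = \varphi \circ \nu$ and $\tilde{\varphi}$ is an automorphism, $\tilde{\varphi}$ permutes the fibers of $\nu$ compatibly with $\varphi$. Because $X$ is the categorical quotient of $\tilde{X}$ by the fiber equivalence relation of $\nu$, the automorphism $\tilde{\varphi}$ descends to an automorphism of $X$, which necessarily coincides with $\varphi$, and $\tilde{\varphi}^{-1}$ supplies the required morphism inverse.

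The main obstacle throughout is the middle step: controlling the codimension of the non-injective locus of $\tilde{\varphi}$ from information about $\varphi$ alone, particularly along the non-normal locus where $\nu$ itself has positive-dimensional fibers of coincidences. The birationality of $\tilde{\varphi}$ is the key leverage; without it, identifications along the non-normal locus could a priori produce a codimension-one locus of non-injectivity for $\tilde{\varphi}$.
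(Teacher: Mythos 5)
The paper gives no proof of this lemma; it is cited directly from Kaliman's \emph{On a theorem of Ax}, so there is nothing in the paper itself to compare against. Judging your proposal on its own merits: the high-level route (normalize, lift $\varphi$ to $\tilde{\varphi}$ on $\tilde X$ via the universal property, apply the normal case, descend) is the natural one and is presumably the route Kaliman also takes, but as written there are two genuine gaps.

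First, the step you yourself flag as ``the hard part'' is indeed unproven. You only show that a non-injective pair $\tilde{x}_1 \neq \tilde{x}_2$ for $\tilde{\varphi}$ outside $\nu^{-1}(Y)$ must lie over the locus $N'\subset X$ where $\nu$ fails to be injective, and $N'$ (contained in the non-normal locus) can have codimension $1$. The sentence ``an analysis combining this birationality with a Zariski-main-theorem-type argument should show$\ldots$'' is a gesture, not an argument. To actually rule out a divisorial component of the non-injective locus one needs a concrete computation, e.g.\ that over a prime divisor $E=\overline{\tilde{\varphi}(D)}$ the birationality of $\tilde{\varphi}$ and normality of $\tilde X$ force $\tilde{\varphi}^{*}\colon \mathcal{O}_{\tilde X,\eta_E}\to\mathcal{O}_{\tilde X,\eta_D}$ to be an isomorphism of DVRs, so $\tilde{\varphi}$ is a local isomorphism at $\eta_D$ and there is a unique divisor of $\tilde X$ dominating $E$; combined with the finiteness of $\nu$-fibers and the fact that $\varphi$ does not contract any divisor meeting $X\setminus Y$, this pins down the non-injective locus. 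None of this is in the proposal; ``up to enlarging $\tilde Y$ within its codimension bound'' is precisely the claim that needs proving.

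Second, the descent step is flawed as stated. That $\tilde{\varphi}$ descends to $\varphi$ is automatic from $\nu\circ\tilde{\varphi}=\varphi\circ\nu$, but for $\tilde{\varphi}^{-1}$ to descend through $\nu$ you need $\nu\circ\tilde{\varphi}^{-1}$ to be constant on $\nu$-fibers, i.e.\ $\nu(\tilde{x}_1)=\nu(\tilde{x}_2)\Rightarrow \nu(\tilde{\varphi}^{-1}(\tilde{x}_1))=\nu(\tilde{\varphi}^{-1}(\tilde{x}_2))$. Unwinding, this is exactly the injectivity of $\varphi$ at the relevant points, which is part of what the lemma is supposed to establish; so the argument is circular. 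Moreover ``$X$ is the categorical quotient of $\tilde X$ by the fiber equivalence relation of $\nu$'' is not correct at the level of schemes: $\nu$ is a topological quotient map (being finite, hence proper), but the structure sheaf $\mathcal{O}_X\subsetneq\nu_*\mathcal{O}_{\tilde X}$ is not recovered from the set-theoretic relation alone, so an automorphism of $\tilde X$ compatible with the relation need not a priori induce a scheme automorphism of $X$. You would need a separate argument — for instance showing directly that $\varphi$ is bijective, then pulling back regular functions through $\nu$ and using surjectivity of $\tilde{\varphi}^{*}$ on $\nu_*\mathcal{O}_{\tilde X}$ to see that $\varphi^{*}$ is surjective on $\mathcal{O}_X$ — rather than invoking a quotient universal property that does not apply as stated.
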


\begin{lem}[{\cite[Lemma 3]{Kal}}]\label{Z}
Let $Z=X \setminus \varphi(X \setminus Y)$. Then, $Z$ is a closed subset of $X$ and $\dim Y=\dim Z$ if $Z \neq \emptyset$.
\end{lem}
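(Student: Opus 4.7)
The strategy is proof by contradiction. Assume $\varphi$ is a counterexample and that, for every flip $f_i$ in the given MMP sequence, at least one of the conditions (1) or (2) holds. The plan is to extend the inductive argument used to prove Theorem \ref{main2} across every step of the MMP, ending at the canonical model $\overline{X}^{\can}$ with an endomorphism that satisfies Miyanishi's conditions on an open subset whose complement has codimension at least $2$ while still contracting a curve to a point. Since $K_{\overline{X}^{\can}}$ is ample by definition of the canonical model, this will contradict Theorem \ref{main1} applied to $\overline{X}^{\can}$.

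First I would produce the curve $C$. Since $\varphi$ is not an automorphism, it is not bijective, so Ax's Theorem \ref{A} gives non-injectivity; together with injectivity outside $Y$, Lemma \ref{Z} shows $Z := X \setminus \varphi(X \setminus Y)$ is nonempty with $\dim Z = \dim Y \leq 1$, and a fiber-dimension argument on a point of $Z$ produces a curve $C \subset X$ with $\varphi(C) = \{P\}$. I would then set up the induction along the MMP as in Section 4, maintaining a rational self-map $\varphi_i : \overline{X}_i \dashrightarrow \overline{X}_i$ regular on some open $X_i \subset \overline{X}_i$ whose complement has codimension at least $2$, satisfying Miyanishi's conditions, together with the transform $\overline{C}_i$ and (where it remains defined) the point $P_i = \varphi_i(\overline{C}_i)$. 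Divisorial contractions are handled by the same argument used in the proof of Theorem \ref{main2}. For a flip $f_i$, whichever of (1) or (2) holds is used to transport the data to $\overline{X}_{i+1}$: condition (2) provides an open $U \supset \overline{C}_i$ on which $f_i$ is an isomorphism, so $\overline{C}_{i+1} := f_i(\overline{C}_i)$ is a well-defined curve and $\varphi_{i+1}$ is readily computed on the image $f_i(U)$; condition (1) gives $P_{i+1} := f_i(P_i)$ well-defined, and since the flipping locus has codimension at least $2$, the strict transform $\overline{C}_{i+1}$ is a curve contracted by $\varphi_{i+1}$ to $P_{i+1}$.

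At the final step $i = n$, this yields an endomorphism $\varphi_n$ of a suitable open subset of $\overline{X}^{\can}$ satisfying Miyanishi's hypotheses and still sending $\overline{C}_n$ (meeting the open subset in a curve) to a single point. Theorem \ref{main1} then forces $\varphi_n$ to be an automorphism, contradicting the curve contraction. The principal obstacle is the rigorous verification of the flip step when only one of (1) or (2) holds. When only (2) holds, the point $f_i(P_i)$ may fail to be defined if $P_i$ lies in the flipping locus; when only (1) holds, the strict transform of $\overline{C}_i$ may degenerate if $\overline{C}_i$ itself lies in the flipping locus. Showing that in each case the inductive data nevertheless survives, namely a curve $\overline{C}_{i+1}$ contracted by $\varphi_{i+1}$ to a point on the appropriate open subset, with $\mathbb{Q}$-factoriality, the codimension-$\geq 2$ complement, and Miyanishi's conditions all preserved, requires a careful graph-theoretic analysis of $\varphi_{i+1} = f_i \circ \varphi_i \circ f_i^{-1}$ together with the geometry of the flipped locus, and constitutes the chief technicality of the proof.
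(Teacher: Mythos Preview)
Your proposal does not address the stated lemma at all. The statement in question is Lemma~\ref{Z}: for an endomorphism $\varphi$ of $X$ injective on $X\setminus Y$, the set $Z = X \setminus \varphi(X\setminus Y)$ is closed in $X$ and has $\dim Z = \dim Y$ when nonempty. What you have written is instead a sketch of the proof of Theorem~\ref{main3}, the threefold MMP result with flips; you even invoke Lemma~\ref{Z} as an ingredient rather than proving it. Nothing in your text establishes either that $Z$ is closed or that $\dim Z = \dim Y$.

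For the record, the paper does not supply its own proof of this lemma; it is quoted from \cite[Lemma~3]{Kal}. A correct argument would show that $\varphi(X\setminus Y)$ is constructible and dense with closed complement (using that $\varphi|_{X\setminus Y}$ is injective, hence an open embedding by birationality and Zariski's Main Theorem), and then compare dimensions via the fact that $\varphi^{-1}(Z)\subset Y$ while $\varphi$ is dominant and generically finite. Your write-up contains none of this; it is a proof of a different theorem and should be discarded in favor of an argument actually aimed at the lemma.
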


N. Das used the following lemma in \cite{Das}.

\begin{lem}[{\cite[Lemma 2.1]{Das}}]\label{bir1}
Any endomorphism which satisfies the conditions of Conjecture \ref{M} is birational.
\end{lem}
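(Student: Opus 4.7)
The plan is to prove that the generic degree $d$ of $\varphi$ equals $1$ by combining Lemma \ref{Z} with a fiber-counting argument available in characteristic zero.

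The first step is to verify that $\varphi$ is dominant and generically finite. Since $Y$ has codimension at least $2$, the open set $X \setminus Y$ is dense in the irreducible variety $X$. The restriction $\varphi|_{X \setminus Y}$ is injective, hence quasi-finite, so by fiber dimension $\dim \varphi(X \setminus Y) = \dim(X \setminus Y) = \dim X$; irreducibility of $X$ then forces $\varphi$ to be dominant. Because source and target have the same dimension, $\varphi$ is generically finite, and I let $d = [k(X) : \varphi^{*}k(X)]$ denote its degree.

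The second step is to rule out $d \ge 2$ using Lemma \ref{Z}. That lemma gives $Z = X \setminus \varphi(X \setminus Y)$ closed with $\dim Z \le \dim Y \le \dim X - 2$ (or $Z = \emptyset$), so $X \setminus Z$ is dense in $X$ and is contained in $\varphi(X \setminus Y)$. Because we are in characteristic zero, $\varphi$ is generically \'etale, so there is a dense open $V \subseteq X$ over which every fiber has exactly $d$ points set-theoretically. Picking any $v$ in the nonempty intersection $V \cap (X \setminus Z)$, injectivity on $X \setminus Y$ gives exactly one preimage of $v$ lying in $X \setminus Y$, so the remaining $d - 1$ preimages must lie in $Y$. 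If $d \ge 2$, then every such $v$ has at least one preimage in $Y$, which shows $\varphi(Y) \supseteq V \cap (X \setminus Z)$ and hence $\dim \varphi(Y) = \dim X$. This contradicts $\dim \varphi(Y) \le \dim Y \le \dim X - 2$, so we must have $d = 1$, i.e., $\varphi$ is birational.

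The only subtle input I expect to need is the existence of a dense open over which the set-theoretic fiber has cardinality exactly $d$; this is where the characteristic-zero hypothesis enters, via separability of the function-field extension and hence generic \'etaleness. Everything else is just irreducibility, elementary dimension counting, and the codimension bound packaged by Lemma \ref{Z}.
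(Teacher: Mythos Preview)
The paper does not supply its own proof of this lemma; it simply cites \cite[Lemma 2.1]{Das}. Your argument is correct and self-contained.

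One minor remark: invoking Lemma \ref{Z} is not strictly necessary. Injectivity on $X \setminus Y$ already guarantees that \emph{at most} one of the $d$ preimages of a generic point lies in $X \setminus Y$, so at least $d-1$ of them lie in $Y$; this alone forces $\varphi(Y)$ to contain a dense open of $X$ when $d \ge 2$, yielding the same dimension contradiction. Your route through Lemma \ref{Z} is fine, but the streamlined version has the advantage of matching Remark \ref{bir2}: it shows directly that any injective dominant morphism of varieties over $k$ is birational, without requiring source and target to coincide.
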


\begin{rmk}\label{bir2}
Actually, the proof of Lemma \ref{bir1} in \cite{Das} implies that any injective dominant morphism of varieties over $k$ is birational.
\end{rmk}

Due to Remark \ref{bir2}, in many situations, we can use a variant of Zariski's main theorem as follows.

\begin{thm}[{\cite[{\S}9, Original form]{Mum}}]\label{ZMT}
Let $f:V \to U$ be a birational morphism of varieties over $k$. If $f$ has finite fibers and $U$ is normal, then $f$ is an open embedding.
\end{thm}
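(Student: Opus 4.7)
The plan is a proof by contradiction: assume $\varphi$ satisfies the hypotheses of Conjecture \ref{M} but is not an automorphism, and that every flip $f_i$ in the given MMP satisfies condition (1) or (2); I will then propagate the contracted-curve data all the way to the canonical model $\overline{X}^{\can}$, where Theorem \ref{main1} produces a contradiction.

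First I would produce the curve $C$ contracted by $\varphi$ to a point $P$. By Lemma \ref{bir1}, $\varphi$ is birational. If $\varphi$ were quasi-finite, then $X$ being normal together with Theorem \ref{ZMT} would make $\varphi$ an open embedding, in particular injective on all of $X$; Theorem \ref{A} would then force $\varphi$ to be bijective, so $\varphi$ would be an automorphism, contradicting our assumption. Hence some fiber of $\varphi$ is positive-dimensional; choose a curve $C$ inside such a fiber and let $P=\varphi(C)$, a single point of $X$.

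Next I would inductively carry the data $(\overline{C}_i,P_i)$ through the MMP, maintaining the invariant that $\overline{C}_i$ is a curve contracted by $\varphi_i$ to a single point, equal to $P_i$ whenever $P_i$ is defined. Divisorial contractions are handled by the Section 4 construction that also powers Theorem \ref{main2}. At a flip $f_i$ satisfying condition (1), the local isomorphism of $f_i$ near $P_i$ lets me set $P_{i+1}:=f_i(P_i)$, and the identity $f_i\circ\varphi_i=\varphi_{i+1}\circ f_i$ between rational self-maps on suitable open sets transfers the contraction of $\overline{C}_i$ to $P_i$ into the contraction of $\overline{C}_{i+1}$ to $P_{i+1}$. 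At a flip satisfying condition (2), the open neighborhood $U$ of $\overline{C}_i$ maps isomorphically onto $f_i(U)$, so the strict transform $\overline{C}_{i+1}=f_i(\overline{C}_i)$ is a well-defined curve and the contraction property transfers directly via this isomorphism, even when $P_i$ fails to be defined or lies in the flipping locus.

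At the final stage, $\overline{X}^{\can}$ is a $\mathbb{Q}$-factorial normal projective variety with $K_{\overline{X}^{\can}}$ ample, and the image $X^{\can}$ of $X$ inside $\overline{X}^{\can}$ is a dense open subset whose complement has codimension at least $2$; Theorem \ref{main1} then forces $\varphi_n|_{X^{\can}}$ to be an automorphism, but the inductive step produced a curve $\overline{C}_n$ in $\overline{X}^{\can}$ contracted by $\varphi_n$, the desired contradiction. I expect the main obstacle to be the inductive step at a flip satisfying only condition (2), where $P_i$ may be undefined or may sit in the flipping locus and the contraction property must be transferred geometrically through the isomorphism on $U$ without direct control of the image point; a secondary technical point is verifying that $X^{\can}$ really has complement of codimension at least $2$ in $\overline{X}^{\can}$ so that Theorem \ref{main1} is applicable.
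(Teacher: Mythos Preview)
Your proposal does not address the stated theorem at all. Theorem~\ref{ZMT} is the classical Zariski Main Theorem, quoted from \cite{Mum}; the paper does not prove it but merely cites it as a tool. What you have written is instead an outline of a proof of Theorem~\ref{main3}. So as a proof of Theorem~\ref{ZMT} your text is simply off-target.

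Even read as an attempt at Theorem~\ref{main3}, your outline has a genuine gap. You propose to carry the invariant ``$\varphi_i$ contracts $\overline{C}_i$ to a point $P_i$'' through the MMP and then invoke Theorem~\ref{main1} on an open subset $X^{\can}\subset\overline{X}^{\can}$. But after the first step the induced $\varphi_i:\overline{X}_i\dashrightarrow\overline{X}_i$ is only a birational self-map, and the paper explicitly warns (end of Section~3) that $\varphi_i$ need not be defined on $\overline{C}_i$; so the statement ``$\varphi_i$ contracts $\overline{C}_i$'' may not even make sense. Likewise there is no reason $\varphi_n$ restricts to an honest endomorphism of any open $X^{\can}$ with small boundary, so Theorem~\ref{main1} is not applicable in the way you suggest. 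The paper avoids this by tracking the purely numerical invariant $(K_{\overline{X}_i},\overline{C}_i)=0$: this is computed via resolutions and the projection formula, requires no knowledge of where $\varphi_i$ is defined, and at the end directly contradicts the ampleness of $K_{\overline{X}^{\can}}$. Your condition-(2) step, where you say the contraction ``transfers directly via this isomorphism'' on $U$, is precisely where this fails: the image point may lie outside $f_i(U)$, so no contraction statement survives, whereas the intersection-number computation in Section~4 goes through because the discrepancy discrepancy between $\overline{X}_i$ and $\overline{X}_{i+1}$ is supported away from $\overline{C}_i$.
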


Combining Remark \ref{bir2} and Theorem \ref{ZMT}, we have the following proposition.

\begin{prop}\label{bijiso}
Let $f:V \to U$ be a bijective morphism of varieties over $k$. If $U$ is normal, then $f$ is an isomorphism.
\end{prop}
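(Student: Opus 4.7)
The plan is to combine Remark \ref{bir2} with Theorem \ref{ZMT} in a direct bookkeeping argument. Bijectivity of $f$ immediately supplies three facts: $f$ is injective, $f$ is surjective (hence dominant since $U$ is irreducible), and every fiber of $f$ consists of a single point, which is in particular finite.

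First I would apply Remark \ref{bir2} to the injective dominant morphism $f:V \to U$ to conclude that $f$ is birational. Then, since $U$ is normal and $f$ has finite fibers, Theorem \ref{ZMT} applies and shows that $f$ is an open embedding, i.e., it induces an isomorphism between $V$ and the open subset $f(V) \subseteq U$.

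Finally, I would invoke surjectivity once more: since $f$ is an open embedding with image equal to all of $U$, the open embedding is in fact an isomorphism $V \xrightarrow{\sim} U$.

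There is no real obstacle here; the proof is essentially a direct assembly of the two quoted results. The only thing to verify is that the hypotheses match, in particular that bijectivity really does give injective $+$ dominant (to apply Remark \ref{bir2}) and finite fibers (to apply Theorem \ref{ZMT}), both of which are immediate.
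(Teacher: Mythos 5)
Your proposal is correct and matches the paper's proof step for step: apply Remark \ref{bir2} to get birationality, Theorem \ref{ZMT} to get an open embedding, and surjectivity to upgrade to an isomorphism. The only difference is that you spell out the hypothesis-checking (injective $+$ dominant, finite fibers) a bit more explicitly than the paper does.
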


\begin{proof}
By Remark \ref{bir2}, $f$ is birational. Thus, by Theorem \ref{ZMT}, $f$ is an open embedding. Since $f$ is bijective, the image of $f$ is $U$, hence, $f$ is an isomorphism.
\end{proof}

Since we may assume that $X$ is normal by Lemma \ref{normal}, it is enough to show that $\varphi$ is bijective by Proposition \ref{bijiso}.

We will use the notion of rational contractions introduced in \cite{HK}. C. Casagrande gives a useful characterization of rational contractions in \cite{Cas} as follows. 

\begin{prop}[{\cite[Remark 2.2]{Cas}}]\label{ratcon}
Let $f:X \dashrightarrow Y$ be a birational map, where X is a normal $\mathbb{Q}$-factorial projective variety over k, and Y is a normal projective variety over k. Since f is birational, we have nonempty open subsets U $\subset$ X and V $\subset$ Y such that f induces an isomorphism between U and V. We also take a resolution of f, that is, a smooth projective variety $\widetilde{X}$ over k with birational proper morphisms $p:\widetilde{X} \to X$ and $q:\widetilde{X} \to Y$ such that $q=f \circ p$. Then, the following are equivalent.

\begin{enumerate}
\item $f$ is a rational contraction,

\item $\codim (Y \setminus V) \ge 2$,

\item every $p$-exceptional prime divisor is also $q$-exceptional.
\end{enumerate}
\end{prop}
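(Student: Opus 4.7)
The equivalence (1) $\Leftrightarrow$ (3) is essentially the definition of a rational contraction in the sense of \cite{HK} (or a routine reformulation thereof via resolutions), so the main mathematical content is the equivalence (2) $\Leftrightarrow$ (3). My strategy rests on the following standard fact: for any birational morphism $g:\widetilde{Z}\to Z$ with $Z$ normal and any prime divisor $E \subset Z$, there is exactly one prime divisor $\widetilde{E} \subset \widetilde{Z}$ with $g(\widetilde{E})=E$, because the local ring $\mathcal{O}_{\widetilde{Z},\eta_{\widetilde{E}}}$ is a DVR in the common function field $k(\widetilde{Z})=k(Z)$ dominating the DVR $\mathcal{O}_{Z,\eta_E}$, and hence equal to it; in particular $g$ restricts to an isomorphism of local rings at $\eta_{\widetilde{E}}$. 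I will apply this uniqueness statement to both $p$ and $q$, and without loss of generality I will take $U$ and $V$ to be the maximal open sets between which $f$ induces an isomorphism.

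For (3) $\Rightarrow$ (2), I take an arbitrary prime divisor $E \subset Y$ and let $\widetilde{E} \subset \widetilde{X}$ be its unique strict transform under $q$; then $\widetilde{E}$ is not $q$-exceptional, so by (3) it is not $p$-exceptional either, and $F := p(\widetilde{E})$ is a prime divisor of $X$. The chain $\mathcal{O}_{X,\eta_F} = \mathcal{O}_{\widetilde{X},\eta_{\widetilde{E}}} = \mathcal{O}_{Y,\eta_E}$ then shows that $f$ is a local isomorphism at $\eta_F$, forcing $\eta_E \in V$. Since this holds for every prime divisor of $Y$, no prime divisor is contained in $Y \setminus V$, so $\codim(Y \setminus V) \ge 2$.

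For (2) $\Rightarrow$ (3), I argue the contrapositive: suppose $D \subset \widetilde{X}$ is $p$-exceptional but $E := q(D)$ is a prime divisor of $Y$. By (2) the generic point $\eta_E$ lies in $V$, so $x := f^{-1}(\eta_E) \in U$ is a codimension-one point of the normal variety $X$, say $x = \eta_{F_0}$ for a prime divisor $F_0 \subset X$. Its strict transform $\widetilde{F_0}$ under $p$ is not $p$-exceptional, and tracking generic points through the identity $q = f \circ p$ on $p^{-1}(U)$ yields $q(\widetilde{F_0})=E$. The uniqueness statement applied to $q$ then forces $D = \widetilde{F_0}$, contradicting the fact that $p(D)$ has codimension $\ge 2$ whereas $p(\widetilde{F_0}) = F_0$ does not.

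The only real subtlety lies in the uniqueness of strict transforms and in choosing $U$, $V$ maximally; once these are set up, the rest is a mechanical divisor-chase through the resolution, so the obstacle is essentially bookkeeping rather than a deep idea.
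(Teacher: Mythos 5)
The paper does not prove Proposition~\ref{ratcon}; it is quoted verbatim from \cite[Remark 2.2]{Cas} and treated as a black box, with the remark that only the equivalence $(2)\Leftrightarrow(3)$ is ever used. There is therefore no in-paper proof to compare against, so I evaluate your argument on its own terms.

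Your proof of $(2)\Leftrightarrow(3)$ is correct. The central device — uniqueness of the prime divisor of $\widetilde{X}$ dominating a given prime divisor of the (normal) target, proved by noting that a DVR contained in another DVR with the same fraction field must equal it, together with separatedness to pin down the center — is exactly the right tool, and both directions then reduce to a clean divisor-chase. Two small points worth making explicit if you wrote this out in full: (i) one should justify that the maximal pair $(U,V)$ exists, which follows since the locus where $f$ is a local isomorphism is open and $f$ restricts to mutually inverse morphisms there; and (ii) in $(3)\Rightarrow(2)$, the step ``$\mathcal{O}_{X,\eta_F}=\mathcal{O}_{Y,\eta_E}$ forces $\eta_E\in V$'' uses that $f$, being a rational map from a normal variety to a projective one, is automatically defined at the codimension-one point $\eta_F$ by the valuative criterion, and then an isomorphism of local rings at a point promotes to a local isomorphism of finite-type $k$-schemes. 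These are exactly the ``bookkeeping'' steps you flagged, and they do go through. Your dismissal of $(1)\Leftrightarrow(3)$ as definitional is consistent with the paper's own stance (``we do not mention the definition of rational contractions''), so that is a reasonable place to stop. Note also that $\mathbb{Q}$-factoriality of $X$ plays no role in your argument for $(2)\Leftrightarrow(3)$ — only normality of $X$ and $Y$ and smoothness (or just normality) of $\widetilde{X}$ are used — which is fine, since $\mathbb{Q}$-factoriality is presumably only relevant to the rational-contraction formulation $(1)$.
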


We will only use the equivalence of $(2)$ and $(3)$ in Proposition \ref{ratcon}, so we do not mention the definition of rational contractions.

\begin{rmk}\label{invratcon}
In the settings of Proposition \ref{ratcon}, we note that $\widetilde{X}$ also gives a resolution of $f^{-1}:Y \dashrightarrow X$. By the equivalence of $(2)$ and $(3)$ in Proposition \ref{ratcon}, if $\codim(X \setminus U) \ge 2$ and $\codim(Y \setminus V) \ge2$, then the set of $p$-exceptional prime divisors coincides with the set of $q$-exceptional prime divisors.
\end{rmk}

We will also use minimal model program. We mention the following proposition on the uniqueness of minimal models and canonical models. See \cite{KM} for minimal model program, minimal models, and canonical models.

\begin{prop}[{\cite[Theorem 3.52]{KM}}]\label{unique}
Let $V$ be a $\mathbb{Q}$-factorial normal projective variety.

\begin{enumerate}
\item Let $f:V \dashrightarrow M$ and $g:V \dashrightarrow M^{\prime}$ be two minimal models of $V$. Then, the set of exceptional prime divisors of f coincides with that of g.
\item Canonical models of $V$ are isomorphic to each other.
\end{enumerate}
\end{prop}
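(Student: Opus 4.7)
The plan is to argue by contradiction: assume that every flip $f_i$ in the MMP sequence satisfies (1) or (2), and derive that $\varphi$ must be an automorphism, contradicting the standing hypothesis. First, produce the curve $C$ with $\varphi(C) = \{P\}$. By Lemma \ref{bir1}, $\varphi$ is birational, so if $\varphi$ had only finite fibers then Theorem \ref{ZMT} would make it an open immersion; Theorem \ref{A} would then force bijectivity and Proposition \ref{bijiso} would upgrade this to an automorphism, against hypothesis. Hence some fiber of $\varphi$ is positive-dimensional, and in dimension three the Miyanishi condition rules out two-dimensional fibers (a surface fiber $F$ would satisfy $F \not\subset Y$ since $\dim Y \le 1$, so $F \setminus Y$ would be a positive-dimensional subset of $X \setminus Y$ mapped to a single point, violating injectivity on $X \setminus Y$). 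Therefore some fiber contains a curve $C$ with image a single point $P$. As in Section $4$, extend $\varphi$ to $\overline{\varphi} : \overline{X} \dashrightarrow \overline{X}$ and conjugate by the partial MMP $\sigma_i : \overline{X} \dashrightarrow \overline{X}_i$ to define $\varphi_i = \sigma_i \circ \overline{\varphi} \circ \sigma_i^{-1}$, with $\overline{C}_i$ the strict transform of $\overline{C}$ and $P_i = \sigma_i(P)$ wherever $\sigma_i$ is regular at $P$.

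The central input is that the induced rational self-map $\varphi^{\can} := \varphi_n$ on the canonical model is a biregular automorphism. Indeed $\overline{X}^{\can}$ is projective and $\mathbb{Q}$-factorial (the latter being preserved by divisorial contractions and flips), has canonical singularities, and has ample $K_{\overline{X}^{\can}}$. The standard fact that $\mathrm{Bir} = \mathrm{Aut}$ for a projective variety with ample canonical divisor — proved by resolving a birational self-map and using ampleness of $K$ against the pullback of pluricanonical sections — then promotes $\varphi^{\can}$ to a biregular automorphism of $\overline{X}^{\can}$.

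The core of the proof is an induction on $i$ showing that $\varphi_i$ still contracts $\overline{C}_i$ to a single point $Q_i$, with $Q_0 = P$. Throughout, I use the commutative square $\varphi_{i+1} \circ f_i = f_i \circ \varphi_i$. For a divisorial contraction $f_i$ (a morphism), this directly gives $Q_{i+1} = f_i(Q_i)$ and $\overline{C}_{i+1} = f_i(\overline{C}_i)$. For a flip $f_i$ satisfying (1), I inductively identify $Q_i = P_i$ and use that $f_i$ is regular at $P_i$ to conclude $Q_{i+1} = f_i(P_i)$. For a flip satisfying (2), the isomorphism $f_i|_U$ on an open neighbourhood $U$ of $\overline{C}_i$ identifies $\overline{C}_{i+1}$ with $\overline{C}_i$ and transports the contraction across $f_i$ via the same diagram. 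Applied at $i = n$, this forces the automorphism $\varphi^{\can}$ to send the curve $\overline{C}_n$ to the single point $Q_n$, which no automorphism can do — the desired contradiction.

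The main obstacle is the flip subcase in which only condition (2) holds and the point $Q_i$ happens to lie on the flipping curve of $f_i$, so that $f_i$ is undefined at $Q_i$ and the composition $f_i \circ \varphi_i \circ f_i^{-1}$ cannot be evaluated pointwise along $\overline{C}_{i+1}$. To handle this I would resolve $f_i$ by a smooth model $\widetilde{X}_i$ birationally dominating both $\overline{X}_i$ and $\overline{X}_{i+1}$, lift $\varphi_i$ and $\varphi_{i+1}$ there, and verify that the lift carries the strict transform of $\overline{C}_{i+1}$ into a single fibre of $\widetilde{X}_i \to \overline{X}_{i+1}$, so that $Q_{i+1}$ is again a single point. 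This case analysis, together with the inductive identification $Q_i = P_i$ needed to bring condition (1) to bear, is where the technical work of the proof concentrates.
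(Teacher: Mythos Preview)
Your proposal does not address the stated proposition at all. The statement in question is Proposition~\ref{unique}, a cited result from \cite[Theorem 3.52]{KM} asserting (1) that any two minimal models of $V$ have the same set of exceptional prime divisors, and (2) that canonical models of $V$ are unique up to isomorphism. The paper does not supply its own proof of this proposition; it is quoted as background. A proof would proceed via the negativity lemma applied to a common resolution, as the paper itself imitates when proving Proposition~\ref{excdivcan}.

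What you have written is instead a sketch of a proof of Theorem~\ref{main3}: you assume an endomorphism $\varphi$ of $X$ satisfying the Miyanishi hypotheses, produce a contracted curve $C$, push it through the MMP steps under the flip conditions (1) and (2), and aim for a contradiction on the canonical model. None of this bears on the content of Proposition~\ref{unique}. Moreover, even as a proof of Theorem~\ref{main3} your approach diverges from the paper's: the paper tracks the intersection number $(K_{\overline{X}_i},\overline{C}_i)=0$ through each step and derives a contradiction with ampleness of $K_{\overline{X}^{\can}}$, whereas you attempt to track the \emph{point} $Q_i$ and argue that an automorphism of $\overline{X}^{\can}$ cannot contract a curve. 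Your handling of the case where only condition (2) holds and $Q_i$ lies in the flipping locus is left vague, and in fact the paper's method avoids this difficulty precisely because it does not need to define $P_{i+1}$ in that case --- it only needs the intersection number to persist, which follows from the discrepancy comparison across the flip.
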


\section{Proof of Theorem \ref{main1}}
Let $\overline{X}$ be a $\mathbb{Q}$-factorial normal projective variety over $k$, $X$ a dense open subset of $\overline{X}$ with $\codim(\overline{X} \setminus X) \ge 2$, $Y$ a closed subset of $X$ with $\codim Y \ge 2$, $\varphi$ an endomorphism of $X$ such that $\varphi$ is injective on $X \setminus Y$. We assume that either the canonical divisor $K_{\overline{X}}$ of $\overline{X}$ or $-K_{\overline{X}}$ is ample. Our purpose is to prove that $\varphi$ is bijective.

By Lemma \ref{bir1}, $\varphi$ is birational, so we can regard $\varphi$ as a birational transformation of $\overline{X}$, $\varphi:\overline{X} \dashrightarrow \overline{X}$. In particular, by Proposition \ref{bijiso}, $\varphi$ induces an isomorphism between open sets of $\overline{X}$, $X \setminus Y$ and $X \setminus Z$, where $Z=X \setminus \varphi(X \setminus Y)$. Note that $\overline{X} \setminus (X \setminus Y)=(\overline{X} \setminus X) \cup \overline{Y}$, $\codim(\overline{X} \setminus X) \ge 2$, and $\codim \overline{Y}=\dim \overline{X}-\dim \overline{Y}=\dim X-\dim Y=\codim Y \ge 2$, so $\codim(\overline{X} \setminus (X \setminus Y)) \ge 2$, where $\overline{Y}$ is the closure of $Y$ in $\overline{X}$. By Lemma \ref{Z}, we can make the same calculation for $X \setminus Z$. Thus, we have $\codim(\overline{X} \setminus (X \setminus Z)) \ge 2$.

Let $\widetilde{X}$ be a resolution of $\varphi:\overline{X} \dashrightarrow \overline{X}$ with birational proper morphisms $p:\widetilde{X} \to \overline{X}$ and $q:\widetilde{X} \to \overline{X}$ such that $q=\varphi \circ p$. By pulling back the canonical divisor $K_{\overline{X}}$ of $\overline{X}$ by $p$, we have
$$K_{\widetilde{X}}=p^{\ast}K_{\overline{X}}+\sum_i a(E_i, \overline{X})E_i$$
in the N\'{e}ron-Severi group of $\widetilde{X}$, where $E_i$ runs over all $p$-exceptional prime divisors and $a(E_i, \overline{X})$ is a rational number, called the discrepancy of $E_i$ over $\overline{X}$. See \cite[Definition 2.22]{KM} for discrepancies and the above formula. By pulling back $K_{\widetilde{X}}$ by $q$, we also have
$$K_{\widetilde{X}}=q^{\ast}K_{\overline{X}}+\sum_j a(D_j, \overline{X})D_j$$
in the N\'{e}ron-Severi group of $\widetilde{X}$, where $D_j$ runs over all $q$-exceptional prime divisors. We have already seen that $\varphi$ induces an isomorphism between $X \setminus Y$ and $X \setminus Z$ and $\codim(\overline{X} \setminus (X \setminus Y)) \ge 2$, $\codim (\overline{X} \setminus (X \setminus Z)) \ge 2$. Thus, $\varphi$ satisfies the conditions of Remark \ref{ratcon}. Then we have that the set of $p$-exceptional prime divisors coincides with the set of $q$-exceptional prime divisors. Thus, the above formulas give $p^{\ast}K_{\overline{X}}=q^{\ast}K_{\overline{X}}$ in the N\'{e}ron-Severi group of $\widetilde{X}$. So we can identify $p^{\ast}K_{\overline{X}}$ with $q^{\ast}K_{\overline{X}}$ if we consider intersection numbers between these divisors and any curves.

If we take a curve $C$ in $Y$ such that $\varphi(C)$ is a point, we can take a curve $\widetilde{C}$ in $\widetilde{X}$ such that $p(\widetilde{C})=\overline{C}$. Since $\varphi(C)$ is a point and $q=\varphi \circ p$, $q(\widetilde{C})$ is also a point. Thus, we have
$$0=(K_{\overline{X}}, q_{\ast}\widetilde{C})=(q^{\ast}K_{\overline{X}}, \widetilde{C})=(p^{\ast}K_{\overline{X}}, \widetilde{C})=(K_{\overline{X}}, p_{\ast}\widetilde{C})=\deg(\widetilde{C} \slash \overline{C})(K_{\overline{X}}, \overline{C}).$$
Since $\deg(\widetilde{C} \slash \overline{C})$ is a positive integer, we have $(K_{\overline{X}}, \overline{C})=0$.

Now, since either $K_{\overline{X}}$ or $-K_{\overline{X}}$ is ample, a curve $\overline{C}$ with $(K_{\overline{X}}, \overline{C})=0$ cannot exist. Thus, $\varphi$ has finite fibers, so $\varphi$ is an open embedding by Theorem \ref{ZMT}. Hence, $\varphi$ is bijective by Theorem \ref{A}.

\section{Proof of Theorem \ref{main2}}
Let $\overline{X}$ be a $\mathbb{Q}$-factorial normal projective variety over $k$, $X$ a dense open subset of $\overline{X}$ with $\codim(\overline{X} \setminus X) \ge 2$, $Y$ a closed subset of $X$ with $\codim Y \ge 2$, $\varphi$ an endomorphism of $X$ such that $\varphi$ is injective on $X \setminus Y$. Suppose $\overline{X}$ has canonical singularities and $\overline{X}$ has the canonical model $f^{\can}:\overline{X} \dashrightarrow \overline{X}^{\can}$ which is obtained by minimal model program. We assume that each of the steps of minimal model program to obtain $\overline{X}^{\can}$ from $\overline{X}$ is a divisorial contraction as in Theorem \ref{main2}. We will prove that if we can take a curve $C$ in $Y$ such that $\varphi(C)$ is a point, there is a curve $\overline{C}^{\can}$ in $\overline{X}^{\can}$ with $(K_{\overline{X}^{\can}}, \overline{C}^{\can})=0$ by using minimal model program. The existence of such a curve $\overline{C}^{\can}$ contradicts the fact that $K_{\overline{X}^{\can}}$ is ample. Thus, we can prove that $\varphi$ is bijective as we have done in the proof of Theorem \ref{main1}.

We begin to run minimal model program. Let $f:\overline{X} \to \overline{X}^{\prime}$ be a divisorial contraction. We note that we can prove $(K_{\overline{X}}, \overline{C})=0$ in the same way as Section 2, where $\overline{C}$ is the closure of $C$ in $\overline{X}$. This implies that $\overline{C}$ is not contracted by $f$. Since $f(\overline{C})$ is not a point, we take a curve $\overline{C}^{\prime}=f(\overline{C})$ in $\overline{X}^{\prime}$. We will use the following proposition and lemma to prove $(K_{\overline{X}^{\prime}}, \overline{C}^{\prime})=0$.

\begin{prop}\label{excdivcan}
Let $V$ be a $\mathbb{Q}$-factorial normal projective variety which has the canonical model $V^{\can}$. If we have two birational maps $f:V \dashrightarrow V^{\can}$ and $g:V \dashrightarrow V^{\can}$, then the set of exceptional prime divisors of $f$ with positive discrepancies over $V^{\can}$ coincides with that of $g$.
\end{prop}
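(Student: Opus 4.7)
The key observation is that the discrepancy $a(E, V^{\can})$ of a prime divisor $E$ on $V$ depends only on the valuation associated with $E$ and on $V^{\can}$, not on the choice of birational map $f$ or $g$. I would therefore reduce the claim to the following auxiliary statement: if a prime divisor $E$ on $V$ is not exceptional for a birational map $h : V \dashrightarrow V^{\can}$, then $a(E, V^{\can}) = 0$. Granted this, the proposition is immediate: if $E$ is $f$-exceptional with $a(E, V^{\can}) > 0$, then $E$ must also be $g$-exceptional (otherwise the auxiliary statement would force $a(E, V^{\can}) = 0$), and symmetrically. Hence the two sets coincide.

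To prove the auxiliary statement, I would take a resolution $W$ of $h$ with birational proper morphisms $p : W \to V$ and $q : W \to V^{\can}$ such that $q = h \circ p$ on an open subset, and let $\widetilde{E}$ denote the strict transform of $E$ on $W$. Since $E$ is not $h$-exceptional, $h$ sends the generic point of $E$ to the generic point of some prime divisor $D \subset V^{\can}$, so $q$ maps $\widetilde{E}$ birationally onto $D$; in particular $\widetilde{E}$ is not $q$-exceptional. Writing
$$K_W = q^{\ast} K_{V^{\can}} + \sum_j a(F_j, V^{\can}) F_j,$$
with the sum over $q$-exceptional prime divisors $F_j$, one sees that $\widetilde{E}$ does not appear in this sum, so the coefficient of $\widetilde{E}$ in $K_W - q^{\ast} K_{V^{\can}}$ is $0$. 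By the definition of discrepancy -- here one uses that $K_{V^{\can}}$ is $\mathbb{Q}$-Cartier, which holds because the canonical model has ample canonical divisor -- this yields $a(E, V^{\can}) = 0$.

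The main obstacle is really just bookkeeping with conventions: the well-definedness of $a(E, V^{\can})$ independent of the chosen resolution, and the convention that a prime divisor appearing on the base variety itself has discrepancy $0$. No further input from the minimal model program beyond standard facts about discrepancies is required, and in particular Proposition \ref{unique} is not needed since $f$ and $g$ are assumed to target the same canonical model.
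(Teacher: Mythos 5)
Your proof is correct, and it takes a genuinely different and more elementary route than the paper's. The paper imitates the proof of Proposition \ref{unique}(1) (uniqueness of exceptional divisors of minimal models): it takes a common resolution $h\colon U \to V$ through which both $f$ and $g$ factor, writes $h^{\ast} K_V = (f \circ h)^{\ast} K_{V^{\can}} + Z_1$ and $h^{\ast} K_V = (g \circ h)^{\ast} K_{V^{\can}} + Z_2$ with $Z_1, Z_2$ effective exceptional divisors, and then applies the negativity lemma to conclude $Z_1 = Z_2$, pushing forward by $h$ to finish. You avoid the negativity lemma entirely. Your argument rests on two standard facts that you correctly isolate: the discrepancy $a(E, V^{\can})$ depends only on the divisorial valuation $E$ and on $V^{\can}$, not on the choice of birational map to $V^{\can}$ or of the model carrying $E$; and a divisor that is not exceptional over $V^{\can}$ has discrepancy $0$ by convention. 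Because the proposition restricts to divisors of \emph{positive} discrepancy over $V^{\can}$, these two facts immediately force any such divisor to be exceptional for every birational map to $V^{\can}$, so the two sets coincide by symmetry. This is the cleaner argument for the statement as written; the paper's negativity-lemma approach is the one that extends to the minimal-model setting of Proposition \ref{unique}, where zero-discrepancy divisors must also be controlled, but that extra strength is not needed here since only positive discrepancies are in play.
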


\begin{proof}
We imitate the proof of Proposition \ref{unique} $(1)$ in \cite{KM}. Take a proper birational morphism $h:U \to V$ which makes $f \circ h$ and $g \circ h$ morphisms, where $U$ is a $\mathbb{Q}$-factorial normal projective variety. Let $\Delta_U$ be a divisor over $U$ such that $K_U+\Delta_U=h^{\ast}K_V$ and $h_{\ast}\Delta_U=0$. Then, the proof of Proposition \ref{unique} in \cite{KM} tells us that there is an effective $(f \circ h)$-exceptional divisor $Z_1$ which satisfies $K_U+\Delta_U=(f \circ h)^{\ast}K_{V^{\can}}+Z_1$ in the N\'{e}ron-Severi group of $U$. Note that the set of prime divisors which appear in $h_{\ast}Z_1$ coincides with the set of $f$-exceptional prime divisors with a positive discrepancy over $V^{\can}$. Similarly, we also have $Z_2$ for $g \circ h$. In the N\'{e}ron-Severi group of $U$, we have $(f \circ h)^{\ast}K_{V^{\can}}-(g \circ h)^{\ast}K_{V^{\can}}=Z_2-Z_1$. Since $K_{V^{\can}}$ is ample, $(f \circ h)^{\ast}K_{V^{\can}}-(g \circ h)^{\ast}K_{V^{\can}}$ is $(g \circ h)$-nef, and $(g \circ h)^{\ast}K_{V^{\can}}-(f \circ h)^{\ast}K_{V^{\can}}$ is $(f \circ h)$-nef. By negativity lemma in \cite[Lemma 3.39]{KM}, we have $Z_2-Z_1$ and $Z_1-Z_2$ are effective. This implies $Z_1=Z_2$, and $h_{\ast}Z_1=h_{\ast}Z_2$, hence we have proved Proposition \ref{excdivcan}.
\end{proof}

\begin{lem}\label{iterate}
For any positive integer $m$, let $Y_m=Y \cup \varphi^{-1}(Y) \cup \cdots \cup (\varphi^{m-1})^{-1}(Y)$. Then, $\varphi^m$ is injective on $X \setminus Y_m$, and $\codim Y_m \ge 2$. In particular, we may replace $\varphi$ by $\varphi^m$ to prove Conjecture \ref{M}.
\end{lem}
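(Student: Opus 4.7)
The plan is to prove both assertions together by induction on $m$, with the base case $m = 1$ being exactly the hypothesis on $\varphi$. For the inductive step I would split off the new piece by writing $Y_m = Y_{m-1} \cup (\varphi^{m-1})^{-1}(Y)$, and then feed the strengthened injectivity and codimension bounds for $\varphi^{m-1}$ on $X \setminus Y_{m-1}$ into the analysis of $\varphi^m$ on $X \setminus Y_m$.

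For the injectivity claim I would take $x_1, x_2 \in X \setminus Y_m$ with $\varphi^m(x_1) = \varphi^m(x_2)$. Since $x_i \notin (\varphi^{m-1})^{-1}(Y)$, the points $\varphi^{m-1}(x_i)$ lie in $X \setminus Y$, where $\varphi$ is injective by hypothesis; hence $\varphi^{m-1}(x_1) = \varphi^{m-1}(x_2)$. Since also $x_i \notin Y_{m-1}$, the inductive hypothesis for $\varphi^{m-1}$ then yields $x_1 = x_2$.

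For the codimension bound, the key observation is that by induction $\varphi^{m-1}$ is injective on $X \setminus Y_{m-1}$. An injective morphism of varieties over $k$ is quasi-finite (the fibers have at most one closed point and are of finite type over a field), so each component of a preimage maps isomorphically to its image dimension, giving $\dim\bigl((\varphi^{m-1})^{-1}(Y) \cap (X \setminus Y_{m-1})\bigr) \le \dim Y$. Combined with $\codim Y \ge 2$ and the inductive bound $\codim Y_{m-1} \ge 2$, this forces $\codim Y_m \ge 2$. This is the main subtle point: for a general birational self-morphism the preimage of a codimension-$2$ set can drop in codimension (a divisor contracted into $Y$), so one must leverage the strictly injective restriction delivered by the induction rather than birationality alone.

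For the final clause, once Conjecture \ref{M} is known for $\varphi^m$ with closed subset $Y_m$, we get that $\varphi^m$ is an automorphism; decomposing $\varphi^m = \varphi \circ \varphi^{m-1} = \varphi^{m-1} \circ \varphi$ shows that $\varphi$ is both surjective and injective. Proposition \ref{bijiso}, together with the reduction to the normal case via Lemma \ref{normal}, then implies that $\varphi$ itself is an automorphism, so it is harmless to replace $\varphi$ by any iterate $\varphi^m$ throughout the rest of the argument.
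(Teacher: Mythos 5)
Your proof is correct. The paper's own proof of this lemma is simply ``Since $\varphi$ is injective on $X \setminus Y$ and $\codim Y \ge 2$, the claims are obvious,'' so there is no explicit argument to compare against, but you have correctly identified and resolved the one genuinely subtle point that the paper elides: the preimage of a codimension-$\ge 2$ set under an arbitrary endomorphism can drop in codimension (a contracted divisor mapping into $Y$ would do it), so one cannot conclude $\codim \varphi^{-1}(Y) \ge 2$ from generalities. Your fix — splitting $(\varphi^{m-1})^{-1}(Y)$ into the part inside $Y_{m-1}$ (which is harmless) and the part in $X \setminus Y_{m-1}$, where $\varphi^{m-1}$ is injective hence quasi-finite, so that preimages cannot gain dimension — is exactly the right observation and makes the induction close. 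One could phrase the same step slightly differently using Proposition \ref{bijiso} and Lemma \ref{Z}: $\varphi|_{X \setminus Y}$ is an open immersion onto $X \setminus Z$ with $\dim Z = \dim Y$, so $\varphi^{-1}(Y) \cap (X \setminus Y) \cong Y \cap (X \setminus Z)$ has dimension $\le \dim Y$; but this is essentially the same content as your quasi-finiteness argument. The injectivity induction and the reduction at the end (surjectivity from $\varphi \circ \varphi^{m-1} = \varphi^m$, injectivity from $\varphi^{m-1} \circ \varphi = \varphi^m$, then Proposition \ref{bijiso}) are both correct. The only blemish is the phrase ``each component of a preimage maps isomorphically to its image dimension,'' which is garbled; what you mean is that a quasi-finite morphism cannot increase dimension, so $\dim f^{-1}(S) \le \dim S$ — say that directly.
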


\begin{proof}
Since $\varphi$ is injective on $X \setminus Y$ and $\codim Y \ge 2$, the claims are obvious.
\end{proof}

We have two birational maps $f^{\can}:\overline{X} \dashrightarrow \overline{X}^{\can}$ and $f^{\can} \circ \varphi:\overline{X} \dashrightarrow \overline{X}^{\can}$, so Proposition \ref{excdivcan} tells us that for all $f^{\can}$-exceptional prime divisor $E$ with positive discrepancy over $\overline{X}^{\can}$, the closure of $\varphi(E)$ in $\overline{X}$ is also a $f^{\can}$-exceptional prime divisor with positive discrepancy over $\overline{X}^{\can}$. Here, we used the fact that $\varphi$ does not contract any divisors of $X$ to show that the closure of $\varphi(E)$ is a prime divisor for any $E$. Since $\varphi$ is injective on $X \setminus Y$ and $\codim Y \ge 2$, we can identify $\varphi$ as a permutation of $f^{\can}$-exceptional prime divisors with positive discrepancies. The number of $f^{\can}$-exceptional prime divisors with positive discrepancies over $\overline{X}^{\can}$ is finite, so by using Lemma \ref{iterate} to replace $\varphi$ with some iteration $\varphi^m$, we can make $\varphi$ the trivial permutation of $f^{\can}$-exceptional prime divisors with positive discrepancies over $\overline{X}^{\can}$. Thus, if the discrepancy of $D$ over $\overline{X}^{\can}$ is positive, we may suppose $\varphi^{-1}(D) \subset D$. Now, we use the notion of log discrepancies. Let $\widetilde{X}$ be a resolution of $\varphi:\overline{X} \dashrightarrow \overline{X}$ with birational proper morphisms $p:\widetilde{X} \to \overline{X}$ and $q:\widetilde{X} \to \overline{X}$ such that $q=\varphi \circ p$. By pulling back a log canonical divisor $K_{\overline{X}}+D$ of $\overline{X}$ by $p$, we have
$$K_{\widetilde{X}}+(p^{-1})_{\ast}D=p^{\ast}(K_{\overline{X}}+D)+\sum_i a(E_i, \overline{X}, D)E_i$$
in the N\'{e}ron-Severi group of $\widetilde{X}$, where $E_i$ runs over all $p$-exceptional prime divisors and $a(E_i, \overline{X}, D)$ is a rational number, called the log discrepancy of $E_i$ with respect to $(\overline{X}, D)$. See \cite[Definition 2.25, Notation 2.26]{KM} for log discrepancies and the above formula. By pulling back $K_{\widetilde{X}}+D$ by $q$, we also have
$$K_{\widetilde{X}}+(q^{-1})_{\ast}D=q^{\ast}(K_{\overline{X}}+D)+\sum_j a(D_j, \overline{X}, D)D_j$$
in the N\'{e}ron-Severi group of $\widetilde{X}$, where $D_j$ runs over all $q$-exceptional prime divisors. We have already seen in Section $2$ that the set of $p$-exceptional prime divisors coincides with the set of $q$-exceptional prime divisors. Furthermore, since $q=\varphi \circ p$, we have $(p^{-1})_{\ast}D=(q^{-1})_{\ast}D$. Thus, the above formulas give $p^{\ast}(K_{\overline{X}}+D)=q^{\ast}(K_{\overline{X}}+D)$ in the N\'{e}ron-Severi group of $\widetilde{X}$. So we can identify $p^{\ast}(K_{\overline{X}}+D)$ with $q^{\ast}(K_{\overline{X}}+D)$ if we consider intersection numbers between these divisors and any curves. Let $\widetilde{C}$ be a curve in $\widetilde{X}$ such that $p(\widetilde{C})=\overline{C}$. Then, we have
\begin{align*}
0 &= (K_{\overline{X}}+D, q_{\ast}\widetilde{C})=(q^{\ast}(K_{\overline{X}}+D), \widetilde{C})=(p^{\ast}(K_{\overline{X}}+D), \widetilde{C})=(K_{\overline{X}}+D, p_{\ast}\widetilde{C})\\
&= \deg(\widetilde{C} \slash \overline{C})(K_{\overline{X}}+D, \overline{C}).
\end{align*}
Since $\deg(\widetilde{C} \slash \overline{C})$ is a positive integer, we have $(K_{\overline{X}}+D, \overline{C})=0$. We have already seen $(K_{\overline{X}}, \overline{C})=0$, therefore $(D, \overline{C})=0$. Hence, we have
\begin{align*}
0 &= (K_{\overline{X}}-a(D, \overline{X}^{\prime})D, \overline{C})=(f^{\ast}K_{\overline{X}^{\prime}}, \overline{C})=(K_{\overline{X}^{\prime}}, f_{\ast}\overline{C})\\
&= \deg (\overline{C} \slash f(\overline{C}))(K_{\overline{X}^{\prime}}, f(\overline{C})).
\end{align*}
Since $\deg (\overline{C} \slash f(\overline{C}))$ is a positive integer, we have $(K_{\overline{X}^{\prime}}, f(\overline{C}))=0$ as desired.

Also, in the case that the discrepancy of $D$ over $\overline{X}^{\can}$ is not positive, we can prove $(K_{\overline{X}^{\prime}}, f(\overline{C}))=0$. If $a(D, \overline{X}^{\can}) \le 0$, then we have
\begin{align*}
0 \ge a(D, \overline{X}^{\can}) \ge a(D, \overline{X})=0
\end{align*}
by the definition of canonical models, so we have $a(D, \overline{X}^{\can})=0$. Since $\overline{X}$ has canonical singularities, $\overline{X}^{\prime}$ also has canonical singularities. Thus, we have
\begin{align*}
0=a(D, \overline{X}^{\can}) \ge a(D, \overline{X}^{\prime}) \ge 0.
\end{align*}
This implies $a(D, \overline{X}^{\prime})=0$. Hence, we have
\begin{align*}
0=(K_{\overline{X}}, \overline{C})=(f^{\ast}K_{\overline{X}^{\prime}}, \overline{C})=(K_{\overline{X}^{\prime}}, f_{\ast}\overline{C})=\deg (\overline{C} \slash \overline{C}^{\prime})(K_{\overline{X}^{\prime}}, \overline{C}^{\prime}).
\end{align*}
Since $\deg (\overline{C} \slash \overline{C}^{\prime})$ is a positive integer, we have $(K_{\overline{X}^{\prime}}, \overline{C}^{\prime})=0$ as desired.

Note that $\varphi$ induces $\varphi^{\prime}:\overline{X}^{\prime} \dashrightarrow \overline{X}^{\prime}$ but $\varphi^{\prime}$ need not be defined on $\overline{C}^{\prime}$. This is because $\overline{C}$ may be contained in $D$. However, we only use the fact that $(K_{\overline{X}}, \overline{C})=0$ to define $\overline{C}^{\prime}$ and deduce $(K_{\overline{X}^{\prime}}, \overline{C}^{\prime})=0$. Thus, we can replace $\overline{X}$ to $\overline{X}^{\prime}$ and $\overline{C}$ to $\overline{C}^{\prime}$, and repeat the same operations until we obtain $\overline{X}^{\can}$. Finally, we have a curve $\overline{C}^{\can}$ in $\overline{X}^{\can}$ such that $(K_{\overline{X}^{\can}}, \overline{C}^{\can})=0$, and this is a contradiction.

\section{Proof of Theorem \ref{main3}}
Let $X$ be a dense open subset of a $\mathbb{Q}$-factorial normal projective threefold $\overline{X}$ over $k$ and $\codim(\overline{X} \setminus X) \ge 2$. Suppose $\overline{X}$ has canonical singularities and $\overline{X}$ has the canonical model $\overline{X}^{\can}$ which is obtained by minimal model program. Let $\overline{X}=\overline{X}_0 \dashrightarrow \overline{X}_1 \dashrightarrow \cdots \dashrightarrow \overline{X}_n=\overline{X}^{\can}$ be minimal model program to obtain $\overline{X}^{\can}$ from $\overline{X}$. Let $\varphi$ be an endomorphism of $X$ which satisfies the conditions of Conjecture \ref{M} but is not an automorphism. We can take a curve $C$ in $X$ such that $\varphi(C)$ is a point $P \in X$ as in Section $2$. We will prove that if either of two conditions in Theorem \ref{main3} holds for all $i$ such that $f_i$ is a flip, then $\varphi$ is an automorphism of $X$.

First, we will define birational transformations $\varphi_i$, curves $\overline{C}_i$, and points $P_i$ which appear in Theorem \ref{main3}. We define $\varphi_0=\varphi:\overline{X}_0 \dashrightarrow \overline{X}_0$, and for any $i=1, 2,\dots, n-1$, we inductively define a birational transformation $\varphi_i:\overline{X}_i \dashrightarrow \overline{X}_i$ such that $\varphi_i \circ f_{i-1}=f_{i-1} \circ \varphi_{i-1}$. We set $\overline{C}_0=\overline{C}$, which is the closure of $C$ in $\overline{X}$, and $P_0=\varphi(C)$.

If $f_0$ is a divisorial contraction, then we define $\overline{C}_1=f_0(\overline{C}_0)$, which is a curve because $(K_{\overline{X}_0}, \overline{C}_0)$ is zero as in Section 2 and this implies $\overline{C}_0$ is not contracted by $f_0$. We also define $P_1=f_0(P_0).$ In this case, we have $(K_{\overline{X}_1}, \overline{C}_1)=0$ as in Section 3.

If $f_0$ is a flip, then we use the following commutative diagram which is obtained by taking resolutions of several birational maps.

$$\begin{tikzcd}
\widetilde{X}_{(p_{f_0})^{-1} \circ p_{\varphi_0}} \arrow[rr] \arrow[dd] & & \widetilde{X}_{f_0} \arrow[ld, "p_{f_0}"'] \arrow[rd, "q_{f_0}"] & & \widetilde{X}_{(p_{\varphi_1})^{-1} \circ q_{f_0}} \arrow[ll] \arrow[dd]\\
 & \overline{X}_0 \arrow[rr, "f_0", dashed] \arrow[dd, "\varphi_0"', dashed] & & \overline{X}_1 \arrow[dd, "\varphi_1", dashed] & \\
\widetilde{X}_{\varphi_0} \arrow[ru, "p_{\varphi_0}"] \arrow[rd, "q_{\varphi_0}"'] & & & & \widetilde{X}_{\varphi_1} \arrow[lu, "p_{\varphi_1}"'] \arrow[ld, "q_{\varphi_1}"] \\
 & \overline{X}_0 \arrow[rr, "f_0"', dashed] & & \overline{X}_1 & \\
\widetilde{X}_{(p_{f_0})^{-1} \circ q_{\varphi_0}} \arrow[rr] \arrow[uu] & & \widetilde{X}_{f_0} \arrow[lu, "p_{f_0}"] \arrow[ru] & & \widetilde{W} \arrow[llll, "Q", bend left] \arrow[uuuu, "P"', bend right]
\end{tikzcd}$$

In the above diagram, $\widetilde{X}_{\varphi_0}$ means a resolution of $\varphi_0$ with proper birational morphisms $p_{\varphi_0}$ and $q_{\varphi_0}$, where $p_{\varphi_0}$ is a morphism from $\widetilde{X}_{\varphi_0}$ to the domain of $\varphi_0$ and $q_{\varphi_0}$ is a morphism from $\widetilde{X}_{\varphi_0}$ to the codomain of $\varphi_0$. We use the same notations for other birational maps except for $\widetilde{W}$, which is a resolution of the composition of the birational maps $\widetilde{X}_{(p_{\varphi_1})^{-1} \circ q_{f_0}} \to \widetilde{X}_{f_0} \dashrightarrow \widetilde{X}_{(p_{f_0})^{-1} \circ p_{\varphi_0}} \to \widetilde{X}_{\varphi_0} \dashrightarrow \widetilde{X}_{(p_{f_0})^{-1} \circ q_{\varphi_0}}$ with proper birational morphisms $P$ and $Q$, where $P$ is a morphism from $\widetilde{W}$ to $\widetilde{X}_{(p_{\varphi_1})^{-1} \circ q_{f_0}}$ and $Q$ is a morphism from $\widetilde{W}$ to $ \widetilde{X}_{(p_{f_0})^{-1} \circ q_{\varphi_0}}$.

We will take curves in varieties in the diagram in turn. Take $C_1$ in $\widetilde{X}_{\varphi_0}$ such that $p_{\varphi_0}(C_1)=\overline{C}_0$, $C_2$ in $\widetilde{X}_{(p_{f_0})^{-1} \circ q_{\varphi_0}}$ such that $C_2$ maps onto $C_1$ by the morphism $\widetilde{X}_{(p_{f_0})^{-1} \circ q_{\varphi_0}} \to \widetilde{X}_{\varphi_0}$, and $C_3$ in $\widetilde{W}$ such that $Q(C_3)=C_2$. The choices of these curves may not be unique, but it does not matter. Next, let $C_4$ in $\widetilde{X}_{(p_{\varphi_1})^{-1} \circ q_{f_0}}$ be $P(C_3)$, let $C_5$ in $\widetilde{X}_{f_0}$ be the image of $C_4$ by the morphism $\widetilde{X}_{(p_{\varphi_1})^{-1} \circ q_{f_0}} \to \widetilde{X}_{f_0}$, let $C_6$ in $\widetilde{X}_{\varphi_1}$ be the image of $C_4$ by the morphism $\widetilde{X}_{(p_{\varphi_1})^{-1} \circ q_{f_0}} \to \widetilde{X}_{\varphi_1}$, and let $C_7$ in $\overline{X}_1$ be $q_{f_0}(C_5)$. We note that these are actually curves, not points, by the commutativity of the diagram and the fact that $f_0$ is defined on some nonempty open subsets of $\overline{C}_0$. Actually, $(K_{\overline{X}_0}, \overline{C}_0)=0$ and $\dim \overline{X}_0=3$ imply that $\overline{C}_0$ is not contained in the flipping locus. We also note that $C_7$ is equal to $p_{\varphi_1}(C_6)$ by the commutativity of the diagram. We define $\overline{C}_1=C_7$.

If $f_0$ satisfies the first condition in Theorem \ref{main3}, that is, $P_0$ is defined and $f_0$ is defined at $P_0$, then $q_{\varphi_1}(C_6)$ is a point of $\overline{X}_1$ by the commutativity of the diagram. We define $P_1=q_{\varphi_1}(C_6)$. In this case, we deduce $(K_{\overline{X}_1}, \overline{C}_1)=0$ as we have $(K_{\overline{X}_0}, \overline{C}_0)=0$.

If $f_0$ satisfies the second condition in Theorem \ref{main3}, that is, there exists an open subset $U$ of $\overline{X}_i$ such that $f_0$ induces an isomorphism between $U$ and $f_0(U)$, and $U$ contains $\overline{C}_i$, then we cannot define $P_1$. However, we can deduce $(K_{\overline{X}_1}, \overline{C}_1)=0$ as follows. By pulling back $K_{\overline{X}_0}$ by $p_{f_0}$, we have
$$K_{\widetilde{X}_{f_0}}=(p_{f_0})^{\ast}K_{\overline{X}_0}+\sum_j a(D_j, \overline{X}_0)D_j$$
in the N\'{e}ron-Severi group of $\widetilde{X}_{f_0}$, where $D_j$ runs over all $p_{f_0}$-exceptional prime divisors. By pulling back $K_{\overline{X}_1}$ by $q_{f_0}$, we have
$$K_{\widetilde{X}_{f_0}}=(q_{f_0})^{\ast}K_{\overline{X}_1}+\sum_k a(E_k, \overline{X}_1)E_k$$
in the N\'{e}ron-Severi group of $\widetilde{X}_{f_0}$, where $E_k$ runs over all $q_{f_0}$-exceptional prime divisors. By Remark \ref{invratcon}, we have the set of $D_j$ coincides with the set of $E_k$. Note that for some $D_j$, a discrepancy $a(D_j, \overline{X}_0)$ may differ from $a(D_j, \overline{X}_1)$. However, there is a fact that such a $D_j$ maps into the flipping locus by $p_{f_0}$. See \cite[Proposition 5-1-11]{KMM} for this fact. Thus, such a $D_j$ does not intersect with $C_5$ by the assumption and we have $(D_j, C_5)=0$. Hence, we have
\begin{align*}
\deg(C_5 \slash \overline{C}_1)(K_{\overline{X}_1}, \overline{C}_1) &=(K_{\overline{X}_1}, (q_{f_0})_{\ast}C_5)=((q_{f_0})^{\ast}K_{\overline{X}_1}, C_5)\\
&=(K_{\widetilde{X}_{f_0},} C_5)-\sum_k a(E_k, \overline{X}_1)(E_k, C_5)\\
&=(K_{\widetilde{X}_{f_0}}, C_5)-\sum_j a(D_j, \overline{X}_0)(D_j, C_5)\\
&=((p_{f_0})^{\ast}K_{\overline{X}_0}, C_5)\\
&=(K_{\overline{X}_0}, (p_{f_0})_{\ast}C_5)=\deg(C_5 \slash \overline{C}_0)(K_{\overline{X}_0}, \overline{C}_0)=0.
\end{align*}
Since $\deg(C_5 \slash \overline{C}_1)$ is a positive integer, we have $(K_{\overline{X}_1}, \overline{C}_1)=0$ as desired.

To summarize three cases, we define a curve $\overline{C}_1$ in $\overline{X}_1$ with $(K_{\overline{X}_1}, \overline{C}_1)=0$ by using a curve $\overline{C}_0$ in $\overline{X}_0$ with $(K_{\overline{X}_0}, \overline{C}_0)=0$. Moreover, if there is a curve $C^{\prime}$ in a resolution $\widetilde{X}_{\varphi_1}$ of $\varphi_1$ such that $p_{\varphi_1}(C^{\prime})=\overline{C}_1$ and $q_{\varphi_1}(C^{\prime})$ is a point of $\overline{X}_1$, then we define a point $P_1$ in $\overline{X}_1$ as $q_{\varphi_1}(C^{\prime})$. Note that even if $f_0$ is a divisorial contraction, $f_0$ fits into the above diagram, where some resolutions are not needed. We can repeat this operation again and again unless a flip which does not satisfy both of the conditions in Theorem \ref{main3} appears. Finally, we have a curve $\overline{C}_n$ in $\overline{X}_n=\overline{X}^{\can}$ such that $(K_{\overline{X}_n}, \overline{C}_n)=0$, and this contradicts the fact that $K_{\overline{X}_n}=K_{\overline{X}^{\can}}$ is ample.

\begin{rmk}
Note that there may be a flip $f_i$ satisfying both of the conditions in Theorem \ref{main3}. For such a flip, we may ignore the second condition in Theorem \ref{main3} and we can define $P_{i+1}$. Also note that we can define $P_i$ inductively until a flip which is not defined at $P_{i-1}$ appears. 
\end{rmk}

\section{Observations on some other cases}
Let $X$ be a dense open subset of a $\mathbb{Q}$-factorial normal projective variety $\overline{X}$ over $k$ and $\codim(\overline{X} \setminus X) \ge 2$. In this section, we observe the cases where we can obtain a minimal model or a Mori fiber space by minimal model program on $\overline{X}$.

First, we consider the case that $\overline{X}$ has a minimal model $f^{\min}:\overline{X} \dashrightarrow \overline{X}^{\min}$ obtained by minimal model program. Let $f:\overline{X} \dashrightarrow \overline{X}^{\prime}$ is the first step of minimal model program. If $f$ is a divisorial contraction, then we can do the same operation as in Section $3$. In fact, both of $f^{\min}$ and $f^{\min} \circ \varphi$ give a minimal model of $\overline{X}$. Proposition \ref{unique}$(1)$ and the fact that $\varphi$ does not contract any divisors tell us that $\varphi$ can be identified as a permutation of $f^{\min}$-exceptional prime divisors. The number of $f^{\min}$-exceptional prime divisors is finite, so by using Lemma \ref{iterate} to replace $\varphi$ with some iteration $\varphi^m$, we can make $\varphi$ the trivial permutation of $f^{\min}$-exceptional prime divisors. Thus, we can obtain a curve $\overline{C}^{\prime}=f(\overline{C})$ in $\overline{X}^{\prime}$ with $(K_{\overline{X}^{\prime}}, \overline{C}^{\prime})=0$ as in Section $3$. If $f$ is a flip and $\dim \overline{X}=3$, we also can do the same operation as in Section $4$ provided either of the conditions in Theorem \ref{main3} holds. After repeating the operations, we obtain $\overline{X}^{\min}$ and a curve $\overline{C}^{\min}$ in $\overline{X}^{\min}$ with $(K_{\overline{X}^{\min}}, \overline{C}^{\min})=0$. However, $K_{\overline{X}^{\min}}$ is only nef, not ample, so this does not give a contradiction. Moreover, even if Conjecture \ref{M} is true for $\overline{X}^{\min}$, $\varphi$ is not necessarily bijective. This is because $\varphi^{\min}$, a birational transformation of $\overline{X}^{\min}$ induced by $\varphi$, may not be defined on $\overline{C}^{\min}$. Actually, if the following problem is solved for $\overline{X}^{\min}$, then we can deduce $\varphi$ is bijective.

\begin{prob}\label{problem}
Let $V$ be a $\mathbb{Q}$-factorial normal projective variety with the nef canonical divisor $K_{V}$. Let $\varphi$ be a birational transformation which induces an isomorphism between nonempty open subsets $U$, $U^{\prime}$ of $V$. We suppose that $\codim (V \setminus U)$ and $\codim (V \setminus U^{\prime})$ are at least $2$. Then, is $\varphi$ an automorphism of $V$?
\end{prob}

Note that Problem \ref{problem} is true for varieties with the ample canonical divisors as follows.

\begin{prop}\label{canbiriso}
Let $V$ be a $\mathbb{Q}$-factorial normal projective variety with the ample canonical divisor $K_{V}$. Let $\varphi$ be a birational transformation which induces an isomorphism between open subsets $U$, $U^{\prime}$ of $V$. We suppose that $\codim (V \setminus U)$ and $\codim (V \setminus U^{\prime})$ are at least $2$. Then, $\varphi$ is an automorphism of $V$.
\end{prop}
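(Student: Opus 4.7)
The plan is to adapt the argument of Section 2. Let $\widetilde V$ be a smooth resolution of $\varphi$ with proper birational morphisms $p, q \colon \widetilde V \to V$ satisfying $q = \varphi \circ p$. Since both $\codim(V \setminus U)$ and $\codim(V \setminus U^{\prime})$ are at least $2$, Remark \ref{invratcon} yields that the set of $p$-exceptional prime divisors coincides with the set of $q$-exceptional prime divisors on $\widetilde V$. The two discrepancy formulas
\[K_{\widetilde V} = p^{\ast} K_V + \sum_i a(E_i, V)\,E_i = q^{\ast} K_V + \sum_j a(D_j, V)\,D_j,\]
together with the fact that each discrepancy depends only on the prime divisor and on $V$, then give $p^{\ast} K_V = q^{\ast} K_V$ in the N\'{e}ron-Severi group of $\widetilde V$, exactly as in Section 2.

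Now suppose for contradiction that $\varphi$ is not an automorphism. If both $\varphi$ and $\varphi^{-1}$ extended to morphisms $V \to V$, they would be inverse to each other on the dense open $U$ and hence on $V$ by separation, so $\varphi$ would already be an automorphism; after exchanging $\varphi$ and $\varphi^{-1}$ if necessary (which only swaps the roles of $p$ and $q$), I may therefore assume that $\varphi$ has an indeterminacy point $x \in V$. Realize $\widetilde V$ as a resolution of the graph $\Gamma \subset V \times V$ of $\varphi|_U$, with projections $\pi_1, \pi_2 \colon \Gamma \to V$ through which $p$ and $q$ factor respectively. Theorem \ref{ZMT} applied to the proper birational morphism $\pi_1$ to the normal variety $V$ forces the fiber $\pi_1^{-1}(x)$ to be positive-dimensional: otherwise $\pi_1$ would be an open embedding at $x$ and $\varphi = \pi_2 \circ \pi_1^{-1}$ would be defined there. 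Since $\pi_1^{-1}(x) \subset \{x\} \times V$, the second projection $\pi_2$ is injective on this fiber, so $\pi_2(\pi_1^{-1}(x))$ is also positive-dimensional. Lifting an irreducible curve in $\pi_1^{-1}(x)$ to $\widetilde V$, I obtain a curve $\widetilde C \subset \widetilde V$ with $p(\widetilde C) = \{x\}$ a point and $q(\widetilde C)$ a curve.

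For this $\widetilde C$, the numerical identity from the first paragraph and the projection formula give
\[0 = (K_V, p_{\ast} \widetilde C) = (p^{\ast} K_V, \widetilde C) = (q^{\ast} K_V, \widetilde C) = \deg(\widetilde C / q(\widetilde C))\,(K_V, q(\widetilde C)) > 0,\]
where the last inequality uses ampleness of $K_V$ and the fact that $q(\widetilde C)$ is a curve. This contradiction shows $\varphi$ is an automorphism. The main technical step I expect to need care with is the extraction of the curve $\widetilde C$ from an indeterminacy point of $\varphi$ via the graph and Zariski's main theorem; the remaining discrepancy and projection-formula computations are a direct rerun of the argument in Section 2.
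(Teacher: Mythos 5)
Your proof is correct, but it takes a genuinely different route from the paper. The paper's argument is via linear systems: since $\varphi$ restricts to an isomorphism $U \to U'$ with both complements of codimension $\ge 2$, and $V$ is normal, one has $\varphi^\ast K_V = K_V$ as Weil divisor classes; since $K_V$ is ample, $|mK_V|$ for suitable $m$ embeds $V$ into some $\mathbb{P}^l$, and the two embeddings $i$ and $i \circ \varphi$ arising from the same complete linear system differ by an automorphism of $\mathbb{P}^l$, which restricts to an automorphism of $V$ extending $\varphi$. Your proof instead reruns the intersection-theoretic machinery of Section 2: you use Remark \ref{invratcon} and the discrepancy formula to get $p^\ast K_V = q^\ast K_V$, then use Theorem \ref{ZMT} applied to the graph to produce, from a hypothetical indeterminacy point of $\varphi$, a curve $\widetilde C$ in the resolution with $p(\widetilde C)$ a point and $q(\widetilde C)$ a curve, contradicting ampleness of $K_V$. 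Both are sound; the paper's version is shorter and bypasses resolutions and discrepancies entirely, while yours stays closer to the numerical framework already developed for Theorem \ref{main1} (and makes transparent exactly where ampleness, rather than mere nefness, is used — which is the obstruction to Problem \ref{problem}). One small remark on the paper's own version: $K_V$ ample does not by itself give that $|K_V|$ is base-point-free or very ample, so one should pass to $|mK_V|$; your argument avoids this issue since it only needs $(K_V, C) > 0$ for curves $C$.
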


\begin{proof}
We consider the complete linear systems $\lvert K_V \rvert$ of $K_V$, and $\lvert \varphi^{\ast}K_V \rvert$ of $\varphi^{\ast}K_V$. Since $\codim(V \setminus U)\ge 2$ and $\codim(V \setminus U^{\prime}) \ge 2$, we have $\varphi^{\ast}K_V=K_V$. By assumption, $K_V$ is ample, so $\lvert K_V \rvert$ induces an embedding $i:V \to \mathbb{P}^l$, for some $l$. Also, $\lvert \varphi^{\ast}K_V \rvert$ induces an embedding $j:V \to \mathbb{P}^l$ such that $j=i \circ \varphi$. In generally, morphisms to $\mathbb{P}^l$ induced by the same complete linear system differ by automorphisms of $\mathbb{P}^l$ each other. Hence, $j=i \circ \varphi$ implies that $\varphi$ is an automorphism of $V$.
\end{proof}

Second, we consider the case that $\overline{X}$ is birational to a Mori fiber space $\psi:S \to T$. In this case, we can do the same operations as in the previous sections only for flips provided either of the conditions in Theorem \ref{main3} holds since there is no results on $f$-exceptional prime divisors as Proposition \ref{unique} or \ref{excdivcan}, where $f$ is a biratonal map $\overline{X} \dashrightarrow S$ obtained by minimal model program. This gives the following theorem generalizing the Fano-case of Theorem \ref{main1}, but it seems very specific.

\begin{thm}
Let $X$ be a dense open subset of a $\mathbb{Q}$-factorial normal projective threefold $\overline{X}$ over $k$ and $\codim(\overline{X} \setminus X) \ge 2$. Suppose $\overline{X}$ has a Fano model $F$ which is obtained by minimal model program. Let $\overline{X}=\overline{X}_0 \dashrightarrow \overline{X}_1 \dashrightarrow \cdots \dashrightarrow \overline{X}_n=F$ be a sequence given by minimal model program to obtain $F$ from $\overline{X}$. Let $\varphi$ be an endomorphism of $X$ which satisfies the conditions of Conjecture \ref{M} but is not an automorphism. Then there is a curve $C$ in $X$ such that $\varphi(C)$ is a point $P \in X$. In the way described in Section $4$, $\varphi_0=\varphi$ induces $\varphi_i:\overline{X}_i \dashrightarrow \overline{X}_i$ for all $i$, and $\overline{C}_0=\overline{C}$, which is the closure of $C$ in $\overline{X}$, transforms into a curve $\overline{C}_i$ in $\overline{X}_i$ for all $i$, and $P_0=P$ transforms into a point $P_i \in \overline{X}_i$ for some $i$. Then, there exists some $i$ such that $f_i:\overline{X}_i \dashrightarrow \overline{X}_{i+1}$ is a divisorial contraction, or $f_i$ is a flip and neither of the following conditions holds.

\begin{enumerate}
\item $P_i$ is defined and $f_i$ is defined at $P_i$.\\

\item There exists an open subset $U$ of $\overline{X}_i$ such that $f_i$ induces an isomorphism between $U$ and $f_i(U)$, and $U$ contains $\overline{C}_i$.
\end{enumerate}
\end{thm}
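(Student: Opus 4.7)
The plan is to run the contrapositive: assume that for every index $i$ the morphism $f_i$ is \emph{not} a divisorial contraction (so it is a flip) and that at least one of the two listed conditions holds at stage $i$, and then derive that $\varphi$ is an automorphism. The skeleton mirrors the strategy of Theorem~\ref{main3}, with the Fano model $F=\overline{X}_n$ playing the role previously played by $\overline{X}^{\mathrm{can}}$, the crucial replacement being that $-K_F$ (rather than $K_F$) is the ample class we will contradict.

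First I would handle the base case exactly as in Section~2: by Lemma~\ref{bir1} we may regard $\varphi$ as a birational self-map of $\overline{X}$, and since $\varphi$ is assumed not to be an automorphism there is a curve $C\subset X$ with $\varphi(C)=P$ a point. Taking a resolution of $\varphi$ and using Remark~\ref{invratcon} (which applies because $\mathrm{codim}(\overline{X}\setminus X)\ge 2$ and Lemma~\ref{Z}) gives $p^{\ast}K_{\overline{X}}=q^{\ast}K_{\overline{X}}$ in the Néron--Severi group, whence $(K_{\overline{X}_0},\overline{C}_0)=0$ for $\overline{C}_0:=\overline{C}$.

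Next I would propagate this vanishing along the MMP: by the contrapositive assumption every $f_i$ is a flip satisfying condition (1) or (2) in the theorem, and I would invoke the commutative diagram construction of Section~4 verbatim to define $\overline{C}_{i+1}$ and, when condition (1) holds, also the point $P_{i+1}$. The derivation that $(K_{\overline{X}_{i+1}},\overline{C}_{i+1})=0$ is exactly the one carried out for a flip in Section~4: under condition (1) it follows from the formal calculation with $\widetilde{X}_{\varphi_i}$; under condition (2) it follows from the discrepancy identity combined with \cite[Proposition 5-1-11]{KMM}, which guarantees that the divisors on which the discrepancy can jump lie in the flipping locus and hence have zero intersection with $C_5$. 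The key point here is that the Section~4 flip argument is entirely local around the flipping locus and does not appeal to Proposition~\ref{excdivcan} or Proposition~\ref{unique}, so nothing about the endpoint being a canonical model is used — the argument transports without change to the Fano-model setting.

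Finally, iterating through all $n$ steps produces a curve $\overline{C}_n$ in $F=\overline{X}_n$ with $(K_F,\overline{C}_n)=0$. Because $F$ is a Fano model we have $-K_F$ ample, so $(K_F,\overline{C}_n)<0$ for every curve $\overline{C}_n\subset F$, a contradiction; therefore such a curve $C$ cannot exist and $\varphi$ is an automorphism, which by contrapositive yields the stated dichotomy. The main obstacle is the one already flagged in Section~5: unlike the canonical-model case, there is no analogue of Proposition~\ref{excdivcan} controlling how $\varphi$ permutes the exceptional divisors of $\overline{X}\dashrightarrow F$, so we cannot absorb a divisorial contraction into the argument by first passing to an iterate $\varphi^m$ via Lemma~\ref{iterate}; this is precisely why divisorial contractions must be listed as an allowed obstruction in the conclusion of the theorem alongside the two flip conditions.
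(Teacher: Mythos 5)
Your proposal is correct and takes essentially the same route as the paper, whose proof is simply a reference back to the Section~4 argument: you have unpacked that reference accurately, including the base case $(K_{\overline{X}_0},\overline{C}_0)=0$, the propagation through flips under conditions (1) or (2), and the final contradiction with $-K_F$ ample. You also correctly identify the reason divisorial contractions must be added to the list of obstructions, namely that Section~3's treatment of divisorial contractions relies on Proposition~\ref{excdivcan} (or Proposition~\ref{unique}), which has no analogue when the endpoint is a Fano model rather than the canonical model, exactly as the paper itself observes in the discussion preceding this theorem.
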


\begin{proof}
This theorem follows from the argument in Section $4$.
\end{proof}

\section*{Acknowledgements}
The author thanks Seidai Yasuda, my supervisor, for his invaluable guidance. The author also thanks Hisato Matsukawa for his advice on the content of this paper. This work was supported by JST SPRING, Grant Number JPMJSP2119.

\end{document}